\theoremstyle{plain}
\newtheorem{theorem}{Theorem}[section]
\newtheorem{corollary}[theorem]{Corollary}
\theoremstyle{definition}
\newcommand{\Z}{{\mathbb Z}}
\newcommand{\Sg}{{\mathfrak S}}
\newcommand{\Pa}{{\mathcal P}}
\newcommand{\T}{{\mathcal T}}
\def\sttf2#1#2{\left[\!\!\left[#1\atop#2\right]\!\!\right]}
\title[Modular symmetric functions]{New modular symmetric function and its applications: Modular $s$-Stirling numbers}
\author{ Bazeniar Abdelghafour}
\address{\noindent    University Center of Abdelhafid Boussouf, Mila. Department of Mathematics, University of Mohamed Seddik Benyahia,  LMAM laboratory, Jijel 18000, Algeria}
\email{baz.abdelghafour@gmail.com}
\author{Moussa Ahmia}
\address{\noindent  Department of Mathematics, University of Mohamed Seddik Benyahia,  LMAM laboratory, Jijel 18000, Algeria}
\email{ahmiamoussa@gmail.com}
\author{Jos\'e L. Ram\'{\i}rez}
\address{\noindent Departamento de Matem\'aticas, Universidad Nacional de Colombia, Bogot\'a,  Colombia}
\email{jlramirezr@unal.edu.co}
\urladdr{http://sites.google.com/site/ramirezrjl}
\author{ Diego Villamizar}
\address{\noindent   Department of Mathematics and Systems Analysis, Aalto University, 00076 Aalto, Finland}
\email{diego.villamizarrubiano@aalto.fi}
\urladdr{https://sites.google.com/view/dvillami/}
\date{\today}
\subjclass[2010]{05A15, 05A19}
\keywords{Symmetric functions, generating functions, Stirling numbers.}
\begin{document}
\begin{abstract}

In this paper, we consider a  generalization of the Stirling number
sequence of both  kinds by using a specialization of a new family of symmetric functions. We give  combinatorial interpretations for this symmetric functions by means of weighted lattice path and tilings.  We also present some new convolutions  involving the complete and elementary symmetric functions. Additionally, we introduce different families of set partitions to give combinatorial   interpretations for the modular $s$-Stirling numbers.
\end{abstract}

\maketitle

\section{Introduction}
\label{S1}
A symmetric function is homogeneous of degree $k$ if every monomial in it has total degree $k$. Symmetric functions are ubiquitous in mathematics and mathematical physics. For example, they appear in elementary algebra (e.g. Vi\`ete's theorem), representation theories of symmetric groups, and general linear groups over the complex numbers or finite fields. They are also important objects to study in algebraic combinatorics.

A \emph{set partition} of a set $[n] :=\{1, 2, \ldots, n\}$ is a collection of non-empty disjoint subsets, called \emph{blocks}, whose union is $[n]$. Let ${n \brace k}$ denote the number of set partitions of $[n]$ into $k$ non-empty blocks. This sequence is called the \emph{Stirling numbers of the second kind}. Similarly, let ${n \brack k}$ denote the number of permutations of $[n]$ into $k$ cycles. This sequence is called the \emph{Stirling numbers of the first kind}.  The literature contains several generalizations of Stirling numbers of both kinds; see for example 
\cite{Mansour, Mezo, Caicedo}. 

Given a set of variables $x_1,x_2,\ldots,x_n$, the $k$-th \emph{elementary and complete symmetric polynomials} are defined, respectively, by
\begin{align*}
e_k(x_1,x_2,\dots,x_n)&=\sum_{1\leq i_1<i_2<\cdots<i_k\leq n } x_{i_1}x_{i_2}\cdots x_{i_k}, \qquad 1 \leq k \leq n,\\
h_k(x_1,x_2,\dots,x_n)&=\sum_{1\leq i_1\leq i_2\leq \cdots \leq i_k\leq n } x_{i_1}x_{i_2}\cdots x_{i_k}, \qquad k \geq 1,
\end{align*}
with initial conditions $e_0(x_1,x_2,\dots,x_n)=h_0(x_1, x_2,\dots, x_n)=1$. Note that \linebreak $e_k(x_1,x_2,\dots,x_n)=0$ if $k>n$.  The generating functions for the $e_k$ and $h_k$ are given by the expressions 
\begin{align*}
\sum_{k=0}^ne_k(x_1,x_2,\dots,x_n)z^k&=\prod_{i=1}^n(1+x_iz),\\
\sum_{k\geq 0}h_k(x_1,x_2,\dots,x_n)z^k&=\prod_{i=1}^n\frac{1}{1-x_iz}.
\end{align*}
A variety of  combinatorial sequences can be obtained as evaluations of the symmetric polynomials at specific points (cf. \cite{Egge, Mack}). Particularly, the Stirling numbers of both kinds are given by 
$$h_k(1, 2, \dots, n)={n+k \brace k} \quad \text{and}\quad  e_k(1, 2, \dots, n)={n+1 \brack n+1-k}.$$ 

In this work, we introduce an extension of the Stirling numbers of both kinds, called \emph{$s$-modular Stirling numbers}, by introducing a new class of symmetric functions, and considering these new sequences as specializations of this symmetric function. We give a combinatorial interpretation of these symmetric functions by using weighted lattice path and tilings. Similar symmetric functions were studied by Doty and Walker under the name of modular complete symmetric polynomials \cite{DW}.  Most recently, Ahmnia and Merca \cite{MMerc} introduced a variation of these symmetric functions.  Independently, Grinberg \cite{Grinberg} and Fu and Mei   \cite{FM} introduced the same concept under the name of Petrie symmetric functions and   truncated symmetric functions, respectively. Finally, we use set partitions to give a combinatorial interpretation to the $s$-modular Stirling numbers.  Among other things, we give an interpretation (probably new) of the Stirling numbers of first kind in terms of set partitions. We also give a relationship with the Stirling numbers with higher level. This last sequence was recently studied in the context of special polynomials \cite{KomPit}.

\section{Definitions and properties}
 Let $s\geq 1$ be a positive integer. We define a {\it modular symmetric function} by
\begin{equation}\label{emf}
M_{k}^{(s)}(n):=M_{k}^{(s)}(x_{1},\ldots,x_{n})=\sum_{_{\substack{ a_{1}+\cdots +a_{n}=k  \\ a_{1},\ldots,a_{n}\equiv (0,1)\mod(s+1)}}}x_{1}^{a_{1}}\cdots x_{n}^{a_{n}},
\end{equation}
with $M_{k}^{(1)}(n)=h_{k}(n)$ and $M_{k}^{(s)}(0)=\delta_{k,0}$, where  $\delta_{k,0}$ is the Kronecker delta.

For example, for $s=2$ and $n=1$ we have
  \[
  M_{0}^{(2)}(1)=1, \ \ M_{1}^{(2)}(1)=x_{1}, \ \ M_{2}^{(2)}(1)=0, \ \ M_{3}^{(2)}(1)=x^{3}_{1}
 \ \  M_{4}^{(2)}(1)=x^{4}_{1},\ \ M_{5}^{(2)}(1)=0.
  \]
  For $s=2=n$ we have 
   \[
  M_{0}^{(2)}(2)=1, \ \  M_{1}^{(2)}(2)=x_{1}+x_{2}, \ \  M_{2}^{(2)}(2)=x_{1}x_{2}, \ \  M_{3}^{(2)}(2)=x^{3}_{1}+
  x^{3}_{2}.  \]

From the definition of  $M_{k}^{(s)}(n)$ we have the following theorem.
\begin{theorem}\label{teo1} Let $s$ and $n$ be positive integers. Then
\begin{equation}\label{egf}
\sum_{k\geq 0}M_{k}^{(s)}(n)t^{k}=\prod_{i=1}^{n}\frac{1+x_{i}t}{1-\left(
x_{i}t\right) ^{s+1}}.
\end{equation}
\end{theorem}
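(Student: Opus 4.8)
The plan is to prove the product formula one variable at a time, reducing the $n$-variable identity to the single-variable case and then using the multiplicativity inherent in the definition of $M_k^{(s)}(n)$. First I would check the single-variable case: for a single variable $x_i$, the exponents $a_i$ allowed in \eqref{emf} are exactly those nonnegative integers congruent to $0$ or $1$ modulo $s+1$, so
\begin{equation*}
\sum_{k\ge 0} M_k^{(s)}(x_i)\, t^k = \sum_{\substack{a\ge 0\\ a\equiv 0,1 \pmod{s+1}}} (x_i t)^a
= \sum_{j\ge 0} (x_i t)^{(s+1)j} + \sum_{j\ge 0} (x_i t)^{(s+1)j+1}
= \frac{1+x_i t}{1-(x_i t)^{s+1}},
\end{equation*}
where in the last step I split the exponent $a$ as $(s+1)j$ or $(s+1)j+1$ and sum the two geometric series. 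This establishes \eqref{egf} for $n=1$.

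Next I would argue that the generating function is multiplicative in the variables. The key observation is that the congruence condition ``$a_1,\dots,a_n\equiv(0,1)\bmod(s+1)$'' is imposed on each $a_i$ separately, so the set of admissible exponent vectors $(a_1,\dots,a_n)$ with $a_1+\cdots+a_n=k$ is precisely the set of ways of writing $k$ as an ordered sum $k_1+\cdots+k_n$ with each $k_i$ an admissible single-variable exponent. Concretely, I would write
\begin{equation*}
\sum_{k\ge 0} M_k^{(s)}(x_1,\dots,x_n)\, t^k
= \sum_{k\ge 0}\ \sum_{\substack{k_1+\cdots+k_n=k\\ k_i\equiv 0,1 \pmod{s+1}}} x_1^{k_1}\cdots x_n^{k_n}\, t^{k}
= \prod_{i=1}^{n}\Bigl(\sum_{\substack{k_i\ge 0\\ k_i\equiv 0,1 \pmod{s+1}}} (x_i t)^{k_i}\Bigr),
\end{equation*}
which is just the statement that a product of power series, expanded and collected by total degree in $t$, produces exactly this Cauchy-type convolution. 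Combining this with the single-variable evaluation from the first step yields the product on the right-hand side of \eqref{egf}. Formally one can also run this as an induction on $n$: assuming the formula for $n-1$ variables, multiply by the $n$-th factor $\tfrac{1+x_nt}{1-(x_nt)^{s+1}}=\sum_{k\ge 0}M_k^{(s)}(x_n)t^k$ and check that the coefficient of $t^k$ in the product is $\sum_{j=0}^{k}M_j^{(s)}(x_1,\dots,x_{n-1})\,M_{k-j}^{(s)}(x_n)$, which matches $M_k^{(s)}(x_1,\dots,x_n)$ by splitting each admissible vector according to its last coordinate $a_n=k-j$.

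The only point requiring a little care — and the closest thing to an obstacle — is the boundary bookkeeping: verifying that the conventions $M_k^{(s)}(0)=\delta_{k,0}$ (empty product equals $1$) and $M_0^{(s)}(n)=1$ are consistent with the induction base and with the claim $M_k^{(1)}(n)=h_k(n)$; when $s=1$ the factor $\tfrac{1+x_it}{1-(x_it)^2}=\tfrac{1}{1-x_it}$ recovers exactly the complete homogeneous generating function, so the normalization is coherent. Everything else is a routine manipulation of formal power series, and no convergence issues arise since we work in $\mathbb{Z}[x_1,\dots,x_n][[t]]$.
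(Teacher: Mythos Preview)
Your proof is correct and is essentially the same argument the paper has in mind: the paper states Theorem~\ref{teo1} as an immediate consequence of the definition and gives no separate proof, but the very computation you carry out (expanding each factor $\frac{1+x_it}{1-(x_it)^{s+1}}$ as $\sum_{\ell\equiv 0,1\pmod{s+1}}(x_it)^{\ell}$ and then multiplying) appears verbatim at the start of the proof of Theorem~\ref{teo1a}. Your write-up is more explicit about the single-variable base case and the multiplicativity/induction step, but there is no genuine difference in method.
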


Moreover, the modular symmetric function also satisfies the following recurrence relations.
\begin{theorem}\label{teo1a} Let $s$ and $n$ be positive integers. Then
\begin{align}
&M_{k}^{(s)}(n)=\sum_{\substack{ 0\leq j\leq k  \\ j\equiv (0,1)\mod(s+1) }}x_{n}^{j}M_{k-j}^{(s)}(n-1),\label{err2}\\
&M_{k}^{(s)}(n)=x_{n}^{s+1}M_{k-s-1}^{(s)}(n)+x_{n}M_{k-1}^{(s)}(n-1)+M_{k}^{(s)}(n-1),\label{err1}
\end{align}
for $k\geq s+1$.
\end{theorem}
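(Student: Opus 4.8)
The plan is to derive both recurrences from the product formula in Theorem~\ref{teo1}, and then to observe that \eqref{err1} can also be obtained from \eqref{err2} by a short re-indexing.

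First I would isolate the single-variable factor. Expanding,
\[
\frac{1+x_{n}t}{1-(x_{n}t)^{s+1}}
=(1+x_{n}t)\sum_{m\ge 0}(x_{n}t)^{m(s+1)}
=\sum_{\substack{j\ge 0\\ j\equiv 0,1\ (\mathrm{mod}\ s+1)}}x_{n}^{\,j}t^{j},
\]
since the exponents $m(s+1)$ and $m(s+1)+1$ sweep out exactly the residues $0$ and $1$ modulo $s+1$. Writing $F_{n}(t)=\prod_{i=1}^{n}\frac{1+x_{i}t}{1-(x_{i}t)^{s+1}}$, Theorem~\ref{teo1} says $F_{n}(t)=\sum_{k\ge 0}M_{k}^{(s)}(n)t^{k}$, and the product visibly factors as $F_{n}(t)=\frac{1+x_{n}t}{1-(x_{n}t)^{s+1}}\,F_{n-1}(t)$. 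Multiplying the displayed series above by $\sum_{k\ge 0}M_{k}^{(s)}(n-1)t^{k}$ and reading off the coefficient of $t^{k}$ gives \eqref{err2}. Equivalently, \eqref{err2} drops straight out of the combinatorial definition \eqref{emf} by conditioning on the value $a_{n}=j$, which must itself lie in a residue class $0$ or $1$ modulo $s+1$; the remaining exponents $a_{1},\dots,a_{n-1}$ then sum to $k-j$ and describe $M_{k-j}^{(s)}(n-1)$.

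For \eqref{err1} I would instead clear the denominator: from $F_{n}(t)=\frac{1+x_{n}t}{1-(x_{n}t)^{s+1}}\,F_{n-1}(t)$ we get $\bigl(1-(x_{n}t)^{s+1}\bigr)F_{n}(t)=(1+x_{n}t)F_{n-1}(t)$, that is,
\[
F_{n}(t)=(x_{n}t)^{s+1}F_{n}(t)+F_{n-1}(t)+x_{n}t\,F_{n-1}(t).
\]
Comparing coefficients of $t^{k}$ for $k\ge s+1$ yields $M_{k}^{(s)}(n)=x_{n}^{s+1}M_{k-s-1}^{(s)}(n)+M_{k}^{(s)}(n-1)+x_{n}M_{k-1}^{(s)}(n-1)$, which is \eqref{err1}. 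As an internal check one can instead start from \eqref{err2}: the two smallest admissible indices $j=0$ and $j=1$ contribute $M_{k}^{(s)}(n-1)+x_{n}M_{k-1}^{(s)}(n-1)$, and for the remaining indices, which all satisfy $j\ge s+1$, the substitution $j=j'+(s+1)$ puts $j'$ back in the same residue classes with $0\le j'\le k-s-1$, so that $\sum_{j\ge s+1}x_{n}^{\,j}M_{k-j}^{(s)}(n-1)=x_{n}^{s+1}\sum_{j'}x_{n}^{\,j'}M_{(k-s-1)-j'}^{(s)}(n-1)=x_{n}^{s+1}M_{k-s-1}^{(s)}(n)$, again by \eqref{err2} (applied with $k$ replaced by $k-s-1$).

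I do not anticipate a genuine obstacle here. The only points deserving a moment's care are the shift argument---checking that adding $s+1$ fixes the residue classes $\{0,1\}$ and that the truncation $0\le j'\le k-s-1$ is precisely what the hypothesis $k\ge s+1$ guarantees---and the boundary conventions $M_{k}^{(s)}(0)=\delta_{k,0}$, which one should verify make the base cases $n=1$ and small $k$ consistent with both identities.
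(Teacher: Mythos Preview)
Your argument is correct and follows essentially the same route as the paper: isolate the $n$-th factor of the product in Theorem~\ref{teo1}, expand it as $\sum_{j\equiv 0,1\,(\mathrm{mod}\,s+1)}(x_{n}t)^{j}$, multiply by $\sum_{\ell\ge 0}M_{\ell}^{(s)}(n-1)t^{\ell}$, and compare coefficients to obtain \eqref{err2}. For \eqref{err1} the paper merely remarks that it ``follows in a similar manner,'' so your two explicit derivations (clearing the denominator, and re-indexing the tail $j\ge s+1$ of \eqref{err2}) in fact supply more detail than the original.
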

\begin{proof}
From \eqref{emf} and the Theorem \ref{teo1} we have
\begin{eqnarray*}
\sum_{k\geq 0}M_{k}^{(s)}(n)t^{k} &=&\prod_{i=1}^{n}\frac{1+x_{i}t}{%
1-\left( x_{i}t\right) ^{s+1}} =\prod_{i=1}^{n}\left( (1+x_{i}t)\sum_{j\geq 0}\left(
x_{i}t\right) ^{(s+1)j}\right) \\
&=&\prod_{i=1}^{n}\left( \sum_{j\geq 0}\left( x_{i}t\right)
^{(s+1)j}+\sum_{j\geq 0}\left( x_{i}t\right) ^{(s+1)j+1}\right) \\
&=&\prod_{i=1}^{n}\left( \sum_{\ell\equiv (0,1)\mod(s+1)}\left(
x_{i}t\right) ^{\ell}\right) \\
&=&\sum_{j\equiv (0,1)\mod(s+1)}\left( x_{n}t\right)
^{j}\prod_{i=1}^{n-1}\left( \sum_{\ell\equiv (0,1)\mod%
(s+1)}\left( x_{i}t\right) ^{\ell}\right) \\
&=&\sum_{j\equiv (0,1)\mod(s+1)}\left( x_{n}t\right)
^{j}\sum_{\ell=0}^{\infty }M_{\ell}^{(s)}(n-1)t^{\ell} \\
&=&\sum_{k\geq 0}t^{k}\sum_{\substack{ j+\ell=k  \\ j\equiv (0,1)\mod%
(s+1) }}x_{n}^{j}M_{\ell}^{(s)}(n-1) \\
&=&\sum_{k\geq 0}t^{k}\sum_{\substack{ 0\leq j\leq k  \\ j\equiv (0,1)\mod(s+1)}}x_{n}^{j}M_{k-j}^{(s)}(n-1).
\end{eqnarray*}%
By comparing the $k$-th coefficient we obtain \eqref{err2}. The relation \eqref{err1} follows in a similar manner.\end{proof}
Notice that from \eqref{err2} we have the equality
$h_{k}(n)=\sum_{j=0}^{k}x_{n}^{j}h_{k-j}(n-1)$.

\section{Combinatorial interpretation}
The goal of this section is to present a combinatorial interpretation for the modular symmetric functions by means of weighted lattice paths in the plane $\Z\times \Z$.   A \emph{lattice path} $\Gamma$ in the lattice plane $\Z\times \Z$, with steps in a given set $S\subset\Z^2$, is a concatenation of directed steps of $S$, that is $\Gamma=s_1s_2\cdots s_\ell$, where $s_i\in S$, for each $1\leq i \leq \ell$. Let $\Pa_{n,k}$ denote the set of lattice paths from the point $(0,0)$ to the point $(k,n-1)$,  with step set $S=\{ H=(1,0), V=(0,1)\}$, such that the horizontal steps are labelled with the weight $x_i$, where $i-1$ is the level of the step.   Let $\Pa_{n,k}^{(s)}$ denote the weighted lattice path in $\Pa_{n,k}$ such that the number of horizontal steps in each level are congruent to $0$ or $1$ modulo $s+1$. Given a weighted path  $\Gamma$ in  $\Pa_{n,k}^{(s)}$, we denote by $\omega(\Gamma)$  the weight associated to the path $\Gamma$. For example, in Figure \ref{Fig1} we show a lattice path in $\Pa_{6,12}^{(2)}$ of  weight $x_2^6x_4x_5x_6^4$.
\begin{figure}[H]
\centering
\includegraphics[scale=0.7]{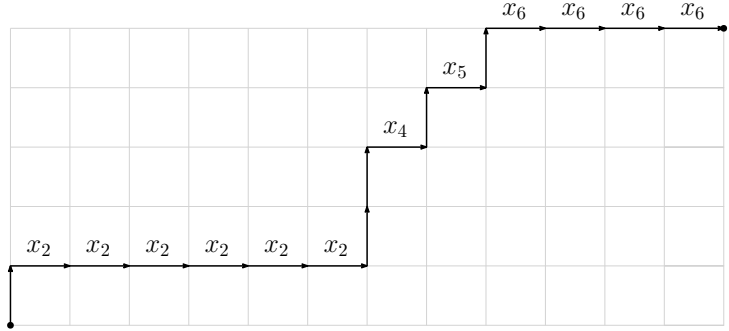}
\caption{Weighted lattice path in $\Pa_{6,12}^{(2)}$.} \label{Fig1}
\end{figure}

From \eqref{emf} we obtain the  following combinatorial interpretation.
\begin{theorem} \label{T4.4}
	Let $k,n$ and $s$ be  positive integers and let $x_{1},x_{2},\ldots,x_{n}$ be independent variables. Then
\[
M_{k}^{(s)}\left( x_{1},x_2,\ldots ,x_{n}\right) =\sum_{\Gamma \in \Pa_{n,k}^{(s)}}\omega(\Gamma).\]
\end{theorem}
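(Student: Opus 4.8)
The plan is to set up a weight-preserving bijection between the monomials appearing in $M_k^{(s)}(x_1,\ldots,x_n)$ and the paths in $\Pa_{n,k}^{(s)}$, and then observe that both sides of the claimed identity are sums of the same monomials. Recall from \eqref{emf} that $M_k^{(s)}(x_1,\ldots,x_n) = \sum x_1^{a_1}\cdots x_n^{a_n}$, where the sum is over tuples $(a_1,\ldots,a_n)$ of nonnegative integers with $a_1+\cdots+a_n = k$ and each $a_i \equiv 0$ or $1 \pmod{s+1}$. So it suffices to exhibit, for each such admissible tuple, a path $\Gamma \in \Pa_{n,k}^{(s)}$ with $\omega(\Gamma) = x_1^{a_1}\cdots x_n^{a_n}$, and to show this correspondence is a bijection.

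First I would describe the map in the forward direction. A path in $\Pa_{n,k}$ goes from $(0,0)$ to $(k,n-1)$ using $k$ horizontal steps and $n-1$ vertical steps; it is determined by recording, for each level $j \in \{0,1,\ldots,n-1\}$, the number $a_{j+1}$ of horizontal steps taken at that level (these are the steps on the line $y = j$, which carry weight $x_{j+1}$). Since the path must reach height $n-1$, it passes through every level $0,1,\ldots,n-1$, so $a_1,\ldots,a_n$ are well defined nonnegative integers with $a_1+\cdots+a_n = k$, and the weight of the path is exactly $\omega(\Gamma) = x_1^{a_1}\cdots x_n^{a_n}$. The membership condition defining $\Pa_{n,k}^{(s)}$ — that the number of horizontal steps in each level is congruent to $0$ or $1$ modulo $s+1$ — translates precisely into $a_i \equiv 0$ or $1 \pmod{s+1}$ for all $i$, which is exactly the summation condition in \eqref{emf}.

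Conversely, given an admissible exponent tuple $(a_1,\ldots,a_n)$ I would build the path by concatenating, for $j$ from $0$ to $n-1$, a block of $a_{j+1}$ horizontal steps at level $j$ followed by a single vertical step (omitting the final vertical step after level $n-1$). This reconstruction is the inverse of the encoding above, so the two maps are mutually inverse bijections; since they preserve weights, summing $\omega(\Gamma)$ over $\Gamma \in \Pa_{n,k}^{(s)}$ gives the same polynomial as summing $x_1^{a_1}\cdots x_n^{a_n}$ over admissible tuples, which is $M_k^{(s)}(x_1,\ldots,x_n)$ by definition.

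I do not expect a serious obstacle here; the content is essentially bookkeeping, since the definition of $\Pa_{n,k}^{(s)}$ was clearly engineered to match \eqref{emf}. The one point that deserves care — and the closest thing to a subtlety — is confirming that the level structure of the path cleanly biject with the exponent vector: one must check that the endpoint and step constraints force exactly $n$ levels (including levels with zero horizontal steps) and that a path is uniquely determined by its sequence of per-level horizontal-step counts. One could also note that Theorem~\ref{T4.4} can alternatively be deduced from the generating function in Theorem~\ref{teo1}, since the factor $\frac{1+x_i t}{1-(x_i t)^{s+1}} = \sum_{\ell \equiv 0,1 \, (s+1)} (x_i t)^\ell$ is exactly the generating function (in $t$) for the weighted horizontal-step blocks at level $i-1$, and the product over $i$ corresponds to concatenating these blocks; this is the same bijective idea packaged transfer-matrix style, so I would present the direct bijection as the main argument and mention the generating-function viewpoint as a remark.
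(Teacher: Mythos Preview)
Your proposal is correct and matches the paper's approach: the paper itself offers no separate proof beyond the sentence ``From \eqref{emf} we obtain the following combinatorial interpretation,'' treating the result as immediate from the definition, and your bijection between admissible exponent tuples $(a_1,\ldots,a_n)$ and paths (recording per-level horizontal-step counts) is exactly the content implicit in that remark. Your write-up simply makes the obvious weight-preserving bijection explicit, which is fine and arguably clearer than leaving it unstated.
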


Figure \ref{fg} shows the weighted lattice path interpretation for
$M_{3}^{(2)}(x_{1},x_{2},x_{3})=x_{1}^{3}+x_{2}^{3}+x_{3}^{3}+x_1x_2x_3$.

\begin{figure}[h]
	\begin{center}
		\begin{tikzpicture}
		\draw[step=0.7cm,color=lightgray] (0,0) grid(2.1,1.4);
		\draw [line width=0.7pt](0,0) -- (0.7,0)--(1.4,0) -- (2.1,0)--(2.1,1.4);	
		\fill[black] (0,0) circle(1.5pt) ;
		\fill[black] (2.1,1.4) circle(1.5pt) ;	
		\fill[] (0.7,0) circle (0.9pt);\fill[] (1.4,0) circle (0.9pt);\fill[] (2.1,0) circle (0.9pt);
		\fill[] (0,0.7) circle (0.9pt);\fill[] (0.7,0.7) circle (0.9pt);\fill[] (1.4,0.7) circle (0.9pt);
		\fill[] (2.1,0.7) circle (0.9pt);
		\fill[] (0,1.4) circle (0.9pt);\fill[] (0.7,1.4) circle (0.9pt);\fill[] (1.4,1.4) circle (0.9pt);
		\node[rectangle] at (0.4,0.3) {$x_{1}$};
		\node[rectangle] at (1.1,0.3) {$x_{1}$};
		\node[rectangle] at (1.8,0.3) {$x_{1}$};
		\begin{scope}[xshift=1.0cm]
		\begin{scope}[xshift=2.2cm]
		\draw[step=0.7cm,color=lightgray] (0,0) grid(2.1,1.4);
		\draw [line width=0.7pt](0,0) -- (0,0.7)-- (0.7,0.7) -- (1.4,0.7) -- (1.4,0.7)--(2.1,0.7)--(2.1,1.4);
		
		\fill[black] (0,0) circle(1.5pt) ;
		\fill[black] (2.1,1.4) circle(1.5pt) ;
		
		\fill[] (0.7,0) circle (0.9pt);\fill[] (1.4,0) circle (0.9pt);\fill[] (2.1,0) circle (0.9pt);
		\fill[] (0,0.7) circle (0.9pt);\fill[] (0.7,0.7) circle (0.9pt);\fill[] (1.4,0.7) circle (0.9pt);
		\fill[] (2.1,0.7) circle (0.9pt);
		\fill[] (0,1.4) circle (0.9pt);\fill[] (0.7,1.4) circle (0.9pt);\fill[] (1.4,1.4) circle (0.9pt);
		\node[rectangle] at (0.4,0.9) {$x_2$};
		\node[rectangle] at (1.1,0.9) {$x_2$};
		\node[rectangle] at (1.75,0.9) {$x_2$};
		\begin{scope}[xshift=1.0cm]
		\begin{scope}[xshift=2.2cm]
		\draw[step=0.7cm,color=lightgray] (0,0) grid(2.1,1.4);
		\draw [line width=0.7pt](0,0) -- (0,1.4)-- (0.7,1.4) -- (1.4,1.4) -- (2.1,1.4);
	\fill[black] (0,0) circle(1.5pt) ;
		\fill[black] (2.1,1.4) circle(1.5pt) ;
		\fill[] (0.7,0) circle (0.9pt);\fill[] (1.4,0) circle (0.9pt);\fill[] (2.1,0) circle (0.9pt);
		\fill[] (0,0.7) circle (0.9pt);\fill[] (0.7,0.7) circle (0.9pt);\fill[] (1.4,0.7) circle (0.9pt);
		\fill[] (2.1,0.7) circle (0.9pt);
		\fill[] (0,1.4) circle (0.9pt);\fill[] (0.7,1.4) circle (0.9pt);\fill[] (1.4,1.4) circle (0.9pt);
		\node[rectangle] at (0.4,1.6) {$x_3$};
		\node[rectangle] at (1.1,1.6) {$x_3$};
		\node[rectangle] at (1.75,1.6) {$x_3$};
		\begin{scope}[xshift=1.0cm]
			\begin{scope}[xshift=2.2cm]
	\draw[step=0.7cm,color=lightgray] (0,0) grid(2.1,1.4);
		\draw [line width=0.7pt](0,0) -- (0.7,0)-- (0.7,0.7) -- (1.4,0.7) -- (1.4,1.4)--(2.1,1.4);
			\fill[black] (0,0) circle(1.5pt) ;
		\fill[black] (2.1,1.4) circle(1.5pt) ;
		\fill[] (0.7,0) circle (0.9pt);\fill[] (1.4,0) circle (0.9pt);\fill[] (2.1,0) circle (0.9pt);
		\fill[] (0,0.7) circle (0.9pt);\fill[] (0.7,0.7) circle (0.9pt);\fill[] (1.4,0.7) circle (0.9pt);
		\fill[] (2.1,0.7) circle (0.9pt);
		\fill[] (0,1.4) circle (0.9pt);\fill[] (0.7,1.4) circle (0.9pt);\fill[] (1.4,1.4) circle (0.9pt);
		\node[rectangle] at (0.4,0.2) {$x_1$};
		\node[rectangle] at (1.1,0.9) {$x_2$};
		\node[rectangle] at (1.75,1.6) {$x_3$};
		\end{scope}
		\end{scope}
		\end{scope}
		\end{scope}
		\end{scope}
		\end{scope}
		\end{tikzpicture}
	\end{center}
	\caption{The four paths associated to $M_{3}^{(2)}(x_1,x_2,x_3)$.}
	\label{fg}
\end{figure}
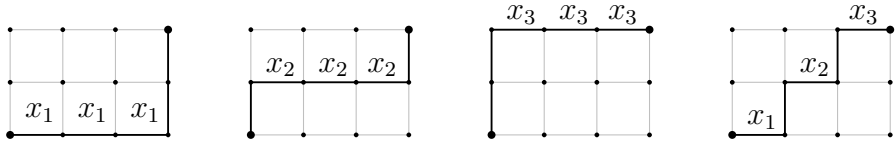

\subsection{Tiling interpretation}
In this section, we use weighted tilings  to give an additional combinatorial interpretation of the modular symmetric function.  We define a \emph{weighted tiling} as a tiling of a board of length $n$ ($n$-board) by gray and black squares, such that each black square received the weight $x_{m+1}$, where $m$ is  equal to the number of gray squares to the left of that black square in the tiling.
Let   $\T^{(s)}_{n,k}$  denote the set of weighted tilings of an $(n+k-1)$-board using exactly $k$ black squares and $n-1$ gray squares, such that the number of successive black squares is congruent to $0$ or $1$ modulo $s+1$. For a tiling $T$, we denote by $\omega(T)$ the weight of $T$.

For example, in Figure \ref{Fig2} we show a weighted tiling in $\T_{6,12}^{(2)}$  of weight $x_2^6x_4x_5x_6^4.$
\begin{figure}[H]
\centering
\includegraphics[scale=0.7]{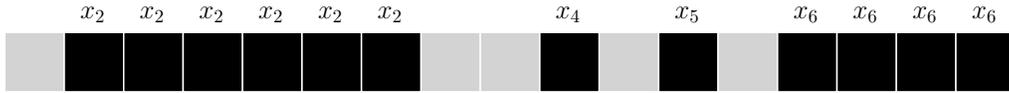}
\caption{Weighted tiling in $\T_{6,12}^{(2)}$.} \label{Fig2}
\end{figure}

There is a bijection between the sets $\Pa^{(s)}_{n,k}$ and  $\T^{(s)}_{n,k}$. Indeed, each vertical step $V$ is replaced by a gray square and each horizontal step is replaced by a black square. Since the bijection between lattice paths and tiling is weight-preserving, we obtain the following result.
\begin{theorem} \label{T4.5}
	Let $k,n$ and $s$ be  positive integers and let $x_{1},x_{2},\ldots,x_{n}$ be independent variables. Then
\[
M_{k}^{(s)}\left( x_{1},x_2,\ldots ,x_{n}\right) =\sum_{T\in \mathcal{T}^{(s)}_{n,k}}\omega(T).\]
\end{theorem}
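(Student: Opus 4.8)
The plan is to establish Theorem \ref{T4.5} by combining the lattice path interpretation of Theorem \ref{T4.4} with an explicit weight-preserving bijection between $\Pa^{(s)}_{n,k}$ and $\T^{(s)}_{n,k}$, so that no direct enumeration of tilings is needed. First I would describe the map $\varphi\colon \Pa^{(s)}_{n,k}\to \T^{(s)}_{n,k}$: reading a path $\Gamma = s_1 s_2 \cdots s_{n+k-1}$ from $(0,0)$ to $(k, n-1)$ step by step, replace each vertical step $V$ by a gray square and each horizontal step $H$ by a black square, preserving the left-to-right order. Since $\Gamma$ has exactly $k$ horizontal and $n-1$ vertical steps, the image is a tiling of an $(n+k-1)$-board with $k$ black and $n-1$ gray squares, which is the right shape for $\T^{(s)}_{n,k}$.

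Next I would verify that $\varphi$ lands in $\T^{(s)}_{n,k}$ and is weight-preserving. The key observation is that the level of a horizontal step in $\Gamma$ equals the number of vertical steps preceding it, which under $\varphi$ is exactly the number of gray squares to the left of the corresponding black square; hence a horizontal step at level $i-1$ (weight $x_i$) maps to a black square with $m = i-1$ gray squares to its left (weight $x_{m+1} = x_i$), so $\omega(\varphi(\Gamma)) = \omega(\Gamma)$. Moreover, a maximal run of consecutive horizontal steps at a fixed level in $\Gamma$ corresponds precisely to a maximal block of successive black squares in $\varphi(\Gamma)$; the defining condition on $\Pa^{(s)}_{n,k}$ (number of horizontal steps at each level $\equiv 0,1 \pmod{s+1}$) is equivalent to the number of horizontal steps in each maximal run being $\equiv 0,1 \pmod{s+1}$ — wait, here I should be careful: at a given level there is exactly one maximal run, so this translates directly to the condition that each maximal run of successive black squares has length $\equiv 0,1 \pmod{s+1}$, which is the defining condition on $\T^{(s)}_{n,k}$.

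Finally I would check that $\varphi$ is a bijection by exhibiting the inverse: given a tiling in $\T^{(s)}_{n,k}$, read it left to right and output a $V$ for each gray square and an $H$ for each black square; this reconstructs a lattice path from $(0,0)$ to $(k,n-1)$, and the congruence conditions match up in both directions by the same run-length argument. Once $\varphi$ is established as a weight-preserving bijection, the identity $\sum_{T \in \T^{(s)}_{n,k}} \omega(T) = \sum_{\Gamma \in \Pa^{(s)}_{n,k}} \omega(\Gamma) = M_k^{(s)}(x_1,\ldots,x_n)$ follows immediately from Theorem \ref{T4.4}. I do not anticipate a serious obstacle here; the only point requiring a little care is the precise dictionary between "level" in the path picture and "number of gray squares to the left" in the tiling picture, and confirming that maximal horizontal runs at a level correspond bijectively to maximal black blocks so the modular conditions are genuinely equivalent. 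This is essentially the bookkeeping that the paragraph preceding the theorem already sketches, so the proof will be short.
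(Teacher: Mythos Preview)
Your proposal is correct and follows exactly the same approach as the paper: the paper's proof is the one-paragraph sketch immediately preceding the theorem, which constructs the very bijection $V\mapsto$ gray, $H\mapsto$ black and asserts it is weight-preserving, then invokes Theorem~\ref{T4.4}. Your write-up simply fills in the details (level $=$ number of gray squares to the left, maximal horizontal runs $=$ maximal black blocks, the explicit inverse) that the paper leaves to the reader.
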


Figure \ref{Fig5} shows the tiling interpretation for $M_{3}^{(2)}(x_1,x_2,x_3)=x_{1}^{3}+x_{2}^{3}+x_{3}^{3}+x_1x_2x_3$.

\begin{figure}[h]
	
	\begin{center}
		\begin{tikzpicture}
		
		\draw[step=0.7cm,color=black!30] (0,0) grid(3.5,0.7);
		\fill[fill=black,draw=black] (0.05,0.05) rectangle (0.65,0.65);
		\fill[fill=black,draw=black] (0.75,0.05) rectangle (1.35,0.65);
		\fill[fill=black,draw=black] (1.45,0.05) rectangle (2.05,0.65);
		\fill[fill=gray!50,draw=gray!50] (2.15,0.05) rectangle (2.75,0.65);
		\fill[fill=gray!50,draw=gray!50] (2.85,0.05) rectangle (3.45,0.65);
		
		\begin{scope}[xshift=1.5cm]
		\begin{scope}[xshift=2.7cm]
		
		\draw[step=0.7cm,color=black!30] (0,0) grid(3.5,0.7);
		\fill[fill=gray!50,draw=gray!50] (0.05,0.05) rectangle (0.65,0.65);
		\fill[fill=black,draw=black] (0.75,0.05) rectangle (1.35,0.65);
		\fill[fill=black,draw=black] (1.45,0.05) rectangle (2.05,0.65);
		\fill[fill=black,draw=black] (2.15,0.05) rectangle (2.75,0.65);
		\fill[fill=gray!50,draw=gray!50] (2.85,0.05) rectangle (3.45,0.65);
		
        \end{scope}
		\end{scope}

		\begin{scope}[yshift=-0.5cm]
		\begin{scope}[yshift=-1.0cm]
		
		\draw[step=0.7cm,color=black!30] (0,0) grid(3.5,0.7);
		\fill[fill=gray!50,draw=gray!50] (0.05,0.05) rectangle (0.65,0.65);
		\fill[fill=gray!50,draw=gray!50] (0.75,0.05) rectangle (1.35,0.65);
		\fill[fill=black,draw=black] (1.45,0.05) rectangle (2.05,0.65);
		\fill[fill=black,draw=black] (2.15,0.05) rectangle (2.75,0.65);
		\fill[fill=black,draw=black] (2.85,0.05) rectangle (3.45,0.65);
		
		   		\begin{scope}[xshift=1.5cm]
		   \begin{scope}[xshift=2.7cm]
		
		   \draw[step=0.7cm,color=black!30] (0,0) grid(3.5,0.7);
		   \fill[fill=black,draw=black] (0.05,0.05) rectangle (0.65,0.65);
		   \fill[fill=gray!50,draw=gray!50] (0.75,0.05) rectangle (1.35,0.65);
		   \fill[fill=black,draw=black] (1.45,0.05) rectangle (2.05,0.65);
		   \fill[fill=gray!50,draw=gray!50] (2.15,0.05) rectangle (2.75,0.65);
		   \fill[fill=black,draw=black] (2.85,0.05) rectangle (3.45,0.65);
		
		   \end{scope}
		   \end{scope}
		
        \end{scope}
        \end{scope}
  		\end{tikzpicture}
	\end{center}
\caption{The four tilings associated to $M_3^{(2)}(x_1,x_2,x_3)$.}\label{Fig5}
\end{figure}
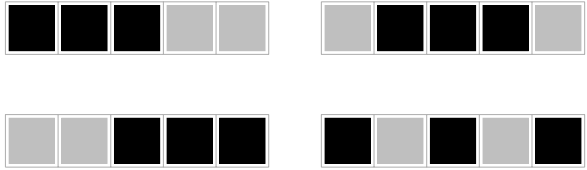
In Theorem \ref{teo32} we  give a combinatorial expression for the sequence $M_{k}^{(s)}(\underbrace{1,1,\ldots ,1}_{n \text{\ times}})$.

\begin{theorem}\label{teo32}
For $n, k \geq 0, s\geq 1$, we have
\begin{equation*}
M_{k}^{(s)}(\underbrace{1,1,\ldots ,1}_{n \ \text{\emph{times}}})=\sum_{j=0}^{\left\lfloor \frac{k}{s+1}%
\right\rfloor }\binom{n}{k-j(s+1)}\binom{j+n-1}{n-1}.
\end{equation*}
\end{theorem}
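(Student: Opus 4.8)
The plan is to read the identity straight off the generating function of Theorem~\ref{teo1} by specializing all variables to $1$. Assume first $n\geq 1$. Setting $x_1=\cdots=x_n=1$ in \eqref{egf} makes every factor of the product equal, so
\[
\sum_{k\geq 0}M_{k}^{(s)}(\underbrace{1,\ldots,1}_{n})\,t^{k}
=\left(\frac{1+t}{1-t^{s+1}}\right)^{n}
=(1+t)^{n}\cdot\frac{1}{(1-t^{s+1})^{n}}.
\]
Then I would expand each factor as a power series in $t$: the binomial theorem gives $(1+t)^{n}=\sum_{a\geq 0}\binom{n}{a}t^{a}$, and the generalized binomial series (with $u=t^{s+1}$) gives $(1-t^{s+1})^{-n}=\sum_{j\geq 0}\binom{j+n-1}{n-1}t^{(s+1)j}$. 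Multiplying the two series and collecting the coefficient of $t^{k}$ forces $a+(s+1)j=k$, i.e.\ $a=k-(s+1)j$, whence
\[
M_{k}^{(s)}(\underbrace{1,\ldots,1}_{n})
=\sum_{j\geq 0}\binom{n}{k-(s+1)j}\binom{j+n-1}{n-1}.
\]
The summand is $0$ unless $0\leq k-(s+1)j\leq n$; the condition $k-(s+1)j\geq 0$ is exactly $j\leq\lfloor k/(s+1)\rfloor$, which yields the stated range, while terms with $k-(s+1)j>n$ vanish and may be included harmlessly. This proves the formula for $n\geq 1$; the case $n=0$ follows from the convention $M_k^{(s)}(0)=\delta_{k,0}$ together with the usual reading of $\binom{j+n-1}{n-1}$ as the number of nonnegative solutions of $c_1+\cdots+c_n=j$.

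Alternatively, one can argue combinatorially directly from \eqref{emf} — equivalently, by counting the tilings of Theorem~\ref{T4.5} with all weights equal to $1$. Indeed $M_{k}^{(s)}(1,\ldots,1)$ counts the tuples $(a_1,\ldots,a_n)$ of nonnegative integers with $a_1+\cdots+a_n=k$ and each $a_i\equiv 0$ or $1\pmod{s+1}$. Writing $a_i=(s+1)c_i+\varepsilon_i$ with $c_i\geq 0$ and $\varepsilon_i\in\{0,1\}$, and setting $j=c_1+\cdots+c_n$, one gets $\varepsilon_1+\cdots+\varepsilon_n=k-(s+1)j$; for a fixed $j$ the set of indices with $\varepsilon_i=1$ can be chosen in $\binom{n}{k-(s+1)j}$ ways and the vector $(c_1,\ldots,c_n)$ in $\binom{j+n-1}{n-1}$ ways. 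Summing over $0\leq j\leq\lfloor k/(s+1)\rfloor$ gives the identity.

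There is essentially no genuine obstacle here: the computation is a short generating-function manipulation (or a bijective decomposition). The only points requiring a little care are the bookkeeping of the summation bounds — in particular that out-of-range binomial coefficients vanish, so the truncation at $\lfloor k/(s+1)\rfloor$ loses nothing — and a brief check of the degenerate cases $n=0$ and $k=0$ against the conventions $M_k^{(s)}(0)=\delta_{k,0}$ and $\binom{0}{0}=1$.
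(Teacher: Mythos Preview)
Your proof is correct and mirrors the paper's own treatment: your generating-function argument is exactly the algebraic proof the paper gives immediately after the theorem, and your alternative combinatorial decomposition $a_i=(s+1)c_i+\varepsilon_i$ is the same idea as the paper's tiling proof (their ``super-blocks'' of size $s+1$ correspond to your $c_i$'s, and their single inserted black squares to your $\varepsilon_i$'s), just phrased directly on tuples rather than via Theorem~\ref{T4.5}. The only cosmetic difference is that the paper leads with the tiling argument and appends the generating-function computation, whereas you do the reverse.
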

\begin{proof}
From the combinatorial interpretation $M_{k}^{(s)}(\overbrace{1,1,\ldots ,1}^{n \text{\ times}})$ counts the number of weighted tilings of a $(n+k-1)$-board using exactly $k$ black squares and $n-1$ gray squares, such that the number of successive black squares is congruent to $0$ or $1$ modulo $s+1$.  On the other hand, let $j$ be the number of successive black squares multiples of $s+1$. Notice that $0\leq j \leq \lfloor k/(s+1) \rfloor$. Then there are $j+n-1$ gray and black blocks tiles.  Such a tiling with $j+n-1$ tiles, exactly $j$ of which are black blocks of size  congruent to $0$ module $s+1$ is
$\binom{j+n-1}{j}.$ The remaining $k-j(s+1)$ black squares can be inserted before to each gray square or to the end of the tiling. Since there are $n-1$ gray squares we have $\binom{n}{k-j(s+1)}$ ways to insert the black squares. Hence there are $\binom{n}{k-j(s+1)}\binom{j+n-1}{j}$ tilings altogether. Summing over all $j$ gives the total number of weighted tiling in $\mathcal{T}^{(s)}_{n,k}$, which implies the identity. \qedhere
\end{proof}

Notice that we can also give an algebraic proof for the above result. Indeed, from the generating function given in Theorem \ref{teo1} we have
\begin{align*}
\sum_{k\geq 0}M_{k}^{(s)}(1,1,\ldots ,1)t^{k} &=\frac{(1+t)^{n}}{\left(
1-t^{s+1}\right) ^{n}}=\sum_{j=0}^{n}\binom{n}{j}%
t^{j}\sum_{\ell \geq 0}\binom{\ell+n-1}{n-1}t^{\ell(s+1)} \\
&=\sum_{j=0}^{n}\sum_{\ell\geq 0}\binom{n}{j}\binom{\ell+n-1}{n-1}%
t^{j+\ell(s+1)} = \sum_{k\geq 0}t^{k}\sum_{j+\ell(s+1)=k}\binom{n}{j}\binom{%
\ell+n-1}{n-1} \\
&=\sum_{k\geq 0}t^{k}\sum_{\ell=0}^{\left\lfloor \frac{k}{s+1}%
\right\rfloor }\binom{n}{k-\ell(s+1)}\binom{\ell+n-1}{n-1}.
\end{align*}
By comparing the $k$-th coefficient we obtain the desired result.

\section{Modular $s$-Stirling numbers}
The \textit{Stirling numbers of the second kind} ${n \brace k}$  can be determined by the recurrence relation
${n \brace k}={n-1 \brace k-1}+k{n-1 \brace k}$, with the initial conditions ${0 \brace 0}=1$ and ${n \brace 0}={0 \brace n}=0$ for $n\geq 1$. It is well-known  that   the ${n \brace k}$ are determined by the identities
$x^n=\sum_{k=0}^n{n \brace k}x^{\underline{k}}, n \geq 0$, 
where $x^{\underline{n}}=x(x-1)\cdots (x-(n-1))$ for $n\geq  1$ and $x^{\underline{0}}=1$
or equivalently by the generating function
\begin{align}
\sum_{n \geq k}{n \brace k}x^n&=\frac{x^k}{(1-x)(1-2x)\cdots (1-kx)}, \quad k\geq 0. \label{ogf2}
\end{align}
Using \eqref{ogf2}, it is not difficult to show that the Stirling numbers of the second kind  are the specialization of the complete symmetric function given by
 \begin{align}
{n+k \brace n}&=h_k(1, 2, \dots, n)=\sum _{a_1+\cdots +a_n=k}1^{a_1}\cdots n^{a_n}.\label{s2}
 \end{align}
The Equation \eqref{s2} can be interpreted by considering the following algorithm:

\begin{algorithm}[H]\label{Alg}
\begin{enumerate}[(1) ]
\item Start with the partition of $[1]$ given by $\{1\}$. 
\item Take every integer from $2$ to $1+a_1$ and put it in the block of $1,$ so you end up having $\{1, 2, \dots ,1+a_1\}.$ 
\item Then you place $2+a_1$ in a new block and for every integer in between $3+a_1$ and $3+a_1+a_2$ you have $2$ options, either you place this number in the first block or in the second one. You place $3+a_1+a_2$ in a new block and so now you will have $3$ options. 
\item You keep doing this until you have placed $n+k$ elements.
\end{enumerate}
\caption{Interpretation of \eqref{s2}.}
\label{ReverseFunction}
\end{algorithm}

For example, for $n=3$ and $k=5$ the term $1^{2}2^{1}3^{2}$ corresponds to a partition that looks like
$\{1,2,3,*\}, \ \{4,*\}, \ \{6,*\}$,  where $5$ can go in either of the first $2$ blocks and $7,8$ can go in any block (there are $3$ of them). Giving a total of $2^1\cdot 3^2$ options.

Notice then that the $a_i$ integers have a direct relationship with the minimal elements in each of the blocks of a partition. To see this, consider the following construction: let $\Pi(n,k)$ denote the set of partitions of $[n]$ having $k$ blocks. Suppose $\pi \in \Pi(n,k)$ is represented as $\pi=B_1/B_2/\cdots/B_k$, where $B_i$ denotes the $i$-th block, with $\min(B_1)<\min(B_2)<\cdots <\min(B_k)$. Call $m_i=\min(B_i)$ and define the vector of consecutive differences by
 $$d(\pi):=(d_1,\dots ,d_{k})=(m_2-m_1-1,m_3-m_2-1,\dots ,m_k-m_{k-1}-1,n-m_{k}).$$
In the example above, notice that $d_i$ corresponds to $a_i$ because these are exactly the number of elements that we have to place in $i$ blocks and so there are a total of $i^{d_i}$ ways to do this. Notice, further, that since $m_1=1,$ we have that $d_1+\cdots +d_k=n-k$. If we impose the modularity conditions on the $d_i$'s, we get the modular symmetric function defined in Equation \eqref{emf}.

Notice that \eqref{s2} can be written as ${n \brace k}=h_{n-k}(1, 2, \dots, k).$ From this  last equation and the combinatorial motivation of $d(\pi),$ we introduce a new kind of Stirling numbers.  For all integer $n\geq 0$ and all $k$ with $0\leq k \leq n$  \emph{the modular $s$-Stirling numbers of the second kind}, denoted by ${n \brace k}^{(s)}$, are defined by the expression
 \begin{align}\label{s3}
 {n \brace k}^{(s)}=M_{n-k}^{(s)}(1,2, \dots, k).
  \end{align}
It is clear that for $s=1$ we recover the Stirling numbers of the second kind, that is,  ${n \brace k}^{(1)}={n \brace k}$. From Theorem \ref{teo1a} we have the following recurrence relation:
\begin{equation}\label{ems}
{n \brace k}^{(s)}={n-1 \brace k-1}^{(s)}+k{n-2\brace k-1}^{(s)}+k^{s+1}{n-s-1 \brace k}^{(s)},
\end{equation}
with the initial conditions ${n \brace 0}^{(s)}=\delta _{0,n}$ and ${0 \brace k}^{(s)}=\delta _{k,0}$. Moreover,  we have the following generating function. For positive integers $n$ and $s$, we have
\begin{equation*}
\sum_{n\geq k}{n \brace k}^{(s)}x^{n-k}=\prod_{r=1}^{k}\frac{1+rx^{s}}{1-(rx)^{s+1}}.
\end{equation*}

In Theorem \ref{comin} we give a combinatorial interpretation for the modular $s$-Stirling numbers.
\begin{theorem}\label{comin}
The number of set partitions $\pi$ in $\Pi(n,k)$, such that  the entries in the vector $d(\pi)$ satisfies $d_i\equiv 0,1 \pmod{s+1}$ for each $1\leq i \leq k$ is given by the modular $s$-Stirling numbers ${n\brace k}^{(s)}$.
\end{theorem}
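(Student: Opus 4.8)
The plan is to count the set partitions in the statement according to their gap vector $d(\pi)$ and match the result term-by-term with the monomial expansion of $M_{n-k}^{(s)}(1,2,\dots,k)$, then conclude by \eqref{s3}. First I would prove the following counting lemma: for any nonnegative integer vector $(d_1,\dots,d_k)$ with $d_1+\cdots+d_k=n-k$, the number of partitions $\pi\in\Pi(n,k)$ with $d(\pi)=(d_1,\dots,d_k)$ equals $1^{d_1}2^{d_2}\cdots k^{d_k}$. Indeed, such a vector forces the block minima to be $m_1=1$ and $m_{i+1}=m_i+d_i+1$ for $1\le i<k$, so the minima are determined; conversely, once the minima are fixed, constructing $\pi$ reduces to choosing, for each non-minimal element $j$, the block that contains it. If $m_i<j<m_{i+1}$ (there are $d_i$ such $j$), then $j$ may be placed in any of $B_1,\dots,B_i$ and in none other: placing it in $B_\ell$ with $\ell\le i$ keeps $\min(B_\ell)=m_\ell$ and creates no new minimum, while $B_\ell$ with $\ell>i$ is forbidden because $j<m_\ell$. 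Likewise each of the $d_k$ elements larger than $m_k$ has exactly $k$ admissible blocks. Multiplying over all non-minimal elements gives $\prod_{i=1}^k i^{d_i}$, and since the assignment of non-minimal elements can be read back off $\pi$, this is a bijection.

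With the lemma in hand, I would restrict to the vectors satisfying $d_i\equiv 0,1\pmod{s+1}$; this is precisely the index set occurring in the definition \eqref{emf} of the modular symmetric function specialized at $(x_1,\dots,x_k)=(1,2,\dots,k)$. Summing the lemma over these vectors yields
\[
\#\bigl\{\pi\in\Pi(n,k):\ d_i(\pi)\equiv 0,1\ (\mathrm{mod}\ s+1)\ \text{for all }i\bigr\}
=\sum_{\substack{a_1+\cdots+a_k=n-k\\ a_i\equiv 0,1\ (\mathrm{mod}\ s+1)}} 1^{a_1}2^{a_2}\cdots k^{a_k}
=M_{n-k}^{(s)}(1,2,\dots,k),
\]
which is ${n\brace k}^{(s)}$ by \eqref{s3}. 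I would close by checking the boundary cases so that the identity is valid on the whole range $0\le k\le n$: when $k=0$ only $n=0$ occurs and both sides equal $1$ (matching ${n\brace 0}^{(s)}=\delta_{0,n}$), and when $n<k$ both sides are empty.

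The step I expect to require the most care is the counting lemma — specifically, verifying that every choice of destinations for the non-minimal elements produces a genuine partition of $[n]$ into exactly $k$ blocks whose set of minima is the prescribed $\{m_1,\dots,m_k\}$. Nonemptiness of each block is automatic because $m_i\in B_i$, but one must confirm that no element outside $\{m_1,\dots,m_k\}$ can ever become the minimum of its block, and that two distinct destination assignments cannot yield the same partition. This is the rigorous counterpart of the informal Algorithm discussed around \eqref{s2}, so the write-up can follow that discussion closely while making the bijection explicit; the remaining ingredients — translating the congruence conditions and quoting \eqref{emf} and \eqref{s3} — are routine.
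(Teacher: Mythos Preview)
Your proposal is correct and follows essentially the same approach as the paper: the paper's proof simply invokes the Algorithm described around \eqref{s2} together with the observation that $d_i$ elements each have $i$ admissible blocks, which is exactly the content of your counting lemma. Your write-up is more explicit and rigorous than the paper's one-line reference, but the underlying bijective argument is identical.
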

\begin{proof}
By imposing the modularity conditions on the vector of consecutive differences given above and applying the Algorithm described on Page \pageref{Alg}, the theorem follows.
\end{proof}

For example, ${5\brace 2}^{(2)}=9$ corresponding to the set partitions
\begin{align*}
&1234/5, \quad 1345/2, \quad 134/25, \quad 135/24, \quad 13/245,\\
&145/23, \quad 14/235, \quad 15/234, \quad 1/2345.
\end{align*}
If you restrict the difference vector to have elements of the form $d_i\equiv 0 \pmod{s+1},$ then the number of such partitions is given by $h_{\lfloor \frac{n-k}{s+1}\rfloor}(1^{s+1},\dots ,k^{s+1}).$

On the other hand, the  (unsigned) \emph{Stirling numbers of the first kind},
 satisfies the recurrence relation ${n \brack k}=(n-1){n-1 \brack k} + {n-1 \brack k-1}$, with the initial conditions ${0 \brack 0}=1$ and ${n \brack 0}={0 \brack n}=0$ for $n\geq 1$. This sequence can also  defined as the connection constants in the polynomial identity
\begin{equation}\label{w1prop1}
x(x+1)\cdots(x+(n-1))=\sum_{k=0}^n{n \brack k}x^k.
\end{equation}

\begin{theorem}\label{ps1}
Let $n$ and $k$ be non negative integers  and $s>0$. If $r$ is the remainder of $k$ when divided by $s+1$, then the following equation holds
\begin{align*}
{n+k \brace n}^{(s)}=\sum _{i = 0}^{\min \{\lfloor \frac{n-r}{s+1}\rfloor ,\lfloor \frac{k}{s+1}\rfloor \}}h_{\lfloor \frac{k}{s+1}\rfloor -i}(1^{s+1},\dots ,n^{s+1}){n+1\brack n+1-r-i(s+1)}.
\end{align*}
\end{theorem}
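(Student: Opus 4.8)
The plan is to expand the modular symmetric function $M_k^{(s)}$ by separating, in each exponent, the part divisible by $s+1$ from a possible extra unit, and then recognize the two resulting pieces as a complete symmetric function in the $(s+1)$-st powers and an elementary symmetric function (via the dual formula). Concretely, recall from \eqref{s3} that ${n+k \brace n}^{(s)}=M_k^{(s)}(1,2,\dots,n)$. In the defining sum \eqref{emf}, each admissible exponent $a_j$ is either $(s+1)b_j$ or $(s+1)b_j+1$ with $b_j\ge 0$; writing $\varepsilon_j\in\{0,1\}$ for the indicator of the ``$+1$'' case, the constraint $a_1+\dots+a_n=k$ becomes $(s+1)(b_1+\dots+b_n)+(\varepsilon_1+\dots+\varepsilon_n)=k$. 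The plan is to sum first over the subset $I=\{j:\varepsilon_j=1\}$: if $|I|=m$, then we need $\varepsilon_1+\dots+\varepsilon_n=m\equiv k\pmod{s+1}$, forcing $m\in\{r,\,r+(s+1),\,r+2(s+1),\dots\}$ where $r$ is the remainder of $k$ mod $s+1$, and then $b_1+\dots+b_n=\frac{k-m}{s+1}$. The contribution of a fixed $I$ with $|I|=m$ factors as $\bigl(\prod_{j\in I}x_j\bigr)\cdot h_{(k-m)/(s+1)}(x_1^{s+1},\dots,x_n^{s+1})$ (the $b_j$-sum is exactly a complete homogeneous symmetric function in the $(s+1)$-st powers, since the $b_j$ range over all nonnegative integers). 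Summing over all $I$ of size $m$ yields $e_m(x_1,\dots,x_n)\cdot h_{(k-m)/(s+1)}(x_1^{s+1},\dots,x_n^{s+1})$.

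Next I would substitute $(x_1,\dots,x_n)=(1,2,\dots,n)$ and write $m=r+i(s+1)$ for $i\ge 0$. Then $(k-m)/(s+1)=\lfloor k/(s+1)\rfloor - i$, so the complete symmetric piece is $h_{\lfloor k/(s+1)\rfloor-i}(1^{s+1},\dots,n^{s+1})$, matching the statement. For the elementary piece, I use the classical specialization recalled in the introduction, $e_m(1,2,\dots,n)={n+1\brack n+1-m}$ with $e_m=0$ for $m>n$; with $m=r+i(s+1)$ this is exactly ${n+1\brack n+1-r-i(s+1)}$. Finally, the range of $i$: nonvanishing of $h_{\lfloor k/(s+1)\rfloor-i}$ requires $i\le\lfloor k/(s+1)\rfloor$, and nonvanishing of $e_{r+i(s+1)}(1,\dots,n)$ requires $r+i(s+1)\le n$, i.e. $i\le\lfloor (n-r)/(s+1)\rfloor$; combining gives the upper limit $\min\{\lfloor(n-r)/(s+1)\rfloor,\lfloor k/(s+1)\rfloor\}$ in the theorem. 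Assembling these pieces gives the claimed identity.

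The routine bookkeeping is the separation of exponents and the two standard specializations; the one point that deserves genuine care is the combinatorial factorization over subsets $I$. I would make sure that the map $(a_1,\dots,a_n)\mapsto (I,(b_1,\dots,b_n))$ is a bijection between admissible exponent vectors summing to $k$ and pairs consisting of a subset $I\subseteq[n]$ together with a nonnegative vector $(b_j)$ with $(s+1)\sum b_j = k-|I|$ and $|I|\equiv k\pmod{s+1}$, and that under this bijection the monomial $x^{a}$ factors as $\bigl(\prod_{j\in I}x_j\bigr)\prod_j (x_j^{s+1})^{b_j}$; summing the second factor over all $(b_j)$ with fixed total is precisely the definition of $h_{(k-|I|)/(s+1)}$ in the variables $x_j^{s+1}$, and summing $\prod_{j\in I}x_j$ over all $I$ of a fixed size is the definition of the corresponding elementary symmetric function. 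The main (mild) obstacle is simply organizing this double sum cleanly so that the congruence condition on $|I|$ is handled correctly and the vanishing ranges of $e$ and $h$ are tracked to produce the stated summation bounds; there is no deep difficulty beyond that.
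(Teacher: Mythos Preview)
Your proposal is correct and follows essentially the same route as the paper's own proof: both split each admissible exponent $a_j$ as $(s+1)b_j+\varepsilon_j$ with $\varepsilon_j\in\{0,1\}$, sum first over the subset $I=\{j:\varepsilon_j=1\}$ (the paper calls it $J$), recognize the inner $(b_j)$-sum as $h_{(k-|I|)/(s+1)}(x_1^{s+1},\dots,x_n^{s+1})$ and the sum over $I$ of fixed size as $e_{|I|}(x_1,\dots,x_n)$, and then specialize $e_m(1,\dots,n)={n+1\brack n+1-m}$. Your write-up is in fact a bit more explicit than the paper's about the bijection and the derivation of the summation bounds from the vanishing of $e$ and $h$, but the argument is the same.
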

\begin{proof}
From Equation \eqref{emf}  consider the subset of the variables $\{a_i\}_{i\in [n]}$ such that $a_j\equiv 1\pmod {s+1}.$ Call $J\subseteq [n]$ the set of their subindices in such a way that $j\in J$ if and only if $a_j\equiv 1\pmod {s+1}.$ Notice that for each $j\in J$, one must have that $a_j=b_j(s+1)+1$ for some $b_j\in \mathbb{N}.$ Therefore we have \begin{align*}
   {n+k \brace n}^{(s)}&= M^{(s)}_k(1,\dots ,n)=\sum _{\substack{
    J\subseteq [n]\\|J|\equiv r \pmod{s+1}}}\,\sum _{\substack{a_1+\cdots +a_n=k\\a_j\equiv 1 \pmod{s+1}\\\text{for }j\in J}}1^{a_1}\cdots n^{a_n}\\
    &=\sum _{\substack{
    J\subseteq [n]\\ |J|\equiv r \pmod{s+1}}}\prod _{j\in J}j\sum _{(b_1+\cdots +b_n+\lfloor \frac{|J|}{s+1} \rfloor)+\frac{r}{s+1}=\frac{k}{s+1}}1^{(s+1)b_1}\cdots n^{(s+1)b_n}\\
    &=\sum _{\substack{
    J\subseteq [n]\\ |J|\equiv r \pmod{s+1}}}\prod _{j\in J}j\sum _{b_1+\cdots +b_n=\lfloor \frac{k}{s+1}\rfloor -\lfloor \frac{|J|}{s+1} \rfloor}1^{(s+1)b_1}\cdots n^{(s+1)b_n}\\
    &=\sum _{i=0}^{\lfloor \frac{n-r}{s+1}\rfloor }\left (\sum _{\substack{|J|=i(s+1)+r\\
    J\subseteq [n]}}\prod _{j\in J}j\right )h_{\lfloor \frac{k}{s+1}\rfloor -i}(1^{s+1},\dots ,n^{s+1}).
\end{align*}
From \eqref{w1prop1} the equality follows.
\end{proof}
 For example, for $n=4, k=8,$ and $s=3$ we have ${4+8 \brace 4}^{(s)}=107331$. On the other hand,
\begin{multline*}
\sum _{i = 0}^{1}h_{2-i}(1^{4},2^4, 3^4 ,4^{4}){5\brack 5-4i}\\=(1^42^4+1^43^4+2^43^4+1^44^4+2^44^4+3^44^4+1^8+2^8+3^8+4^8)\cdot 1 \\+ (1^4
+2^4+
3^4+4^4)\cdot 24=107331.
\end{multline*}

From Theorem \ref{ps1} and by the little Fermat's theorem $a^p\equiv a \pmod p$ with $a\in\mathbb{Z}$, we conclude the following interesting congruence.
\begin{corollary}
Let $n$ and $k$ be non negative integers  and $p$ a prime number. If  $r$ is the remainder of $k$ when divided by $p$, then the following congruence holds
\begin{equation*}
{n+k \brace n}^{(p-1)}\equiv \sum _{i=0}^{\min \{\lfloor \frac{n-r}{p}\rfloor ,\lfloor \frac{k}{p}\rfloor \}}{n+\lfloor \frac{k}{p}\rfloor -i \brace n}{n+1\brack n+1-(r+i\cdot p)}\pmod p.
\end{equation*}
\end{corollary}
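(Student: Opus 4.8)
The plan is to derive this corollary directly from Theorem~\ref{ps1} by specializing $s = p-1$ and then applying Fermat's little theorem to collapse the complete-symmetric-function factor modulo $p$. First I would set $s+1 = p$ in the statement of Theorem~\ref{ps1}, so that the sum index runs over $i = 0, \dots, \min\{\lfloor \frac{n-r}{p}\rfloor, \lfloor \frac{k}{p}\rfloor\}$ and the summand becomes
\[
h_{\lfloor k/p\rfloor - i}(1^{p}, 2^{p}, \dots, n^{p})\,{n+1 \brack n+1-r-ip}.
\]
The Stirling-number-of-the-first-kind factor is already an integer and appears unchanged in the claimed congruence (with $r + i(s+1) = r + ip$), so the entire content of the corollary is the congruence
\[
h_{\lfloor k/p\rfloor - i}(1^{p}, 2^{p}, \dots, n^{p}) \equiv h_{\lfloor k/p\rfloor - i}(1, 2, \dots, n) = {n + \lfloor k/p\rfloor - i \brace n} \pmod{p},
\]
where the last equality is exactly the classical specialization $h_m(1,2,\dots,n) = {n+m \brace n}$ recalled in the introduction (equation~\eqref{s2}).

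The key step is therefore the claim that $h_m(x_1^{p}, \dots, x_n^{p}) \equiv h_m(x_1, \dots, x_n) \pmod p$ as polynomials in $\Z[x_1,\dots,x_n]$, or at least after the substitution $x_i = i$. I would prove the polynomial identity by the Frobenius/freshman's-dream argument: working in $\Z[x_1,\dots,x_n]$, reduce mod $p$ and observe that raising to the $p$-th power is a ring endomorphism of $\mathbb{F}_p[x_1,\dots,x_n]$, so from the generating function
\[
\sum_{m \ge 0} h_m(x_1,\dots,x_n) z^m = \prod_{i=1}^{n} \frac{1}{1 - x_i z}
\]
one gets, after substituting $x_i \mapsto x_i^p$ and $z \mapsto z^p$ and using $(1-x_iz)^p \equiv 1 - x_i^p z^p$,
\[
\sum_{m \ge 0} h_m(x_1^{p},\dots,x_n^{p}) z^{pm} = \prod_{i=1}^{n} \frac{1}{1 - x_i^p z^p} \equiv \Bigl(\prod_{i=1}^n \frac{1}{1-x_iz}\Bigr)^{\!p} = \Bigl(\sum_{m\ge 0} h_m(x_1,\dots,x_n) z^m\Bigr)^{\!p} \equiv \sum_{m \ge 0} h_m(x_1,\dots,x_n)^p z^{pm} \pmod p.
\]
Comparing coefficients of $z^{pm}$ gives $h_m(x_1^{p},\dots,x_n^{p}) \equiv h_m(x_1,\dots,x_n)^{p} \pmod p$, and then Fermat's little theorem $a^p \equiv a \pmod p$ applied to the integer $a = h_m(1,2,\dots,n)$ after the specialization $x_i = i$ finishes it. (Alternatively one can avoid generating functions and argue monomial by monomial: $h_m(1^p,\dots,n^p) = \sum 1^{p a_1}\cdots n^{p a_n}$, and each term $\prod i^{p a_i} \equiv \prod i^{a_i} \pmod p$ by Fermat, so the whole sum is $\equiv h_m(1,\dots,n)$ — but the termwise bijection with $h_m(1,\dots,n)$ requires the exponent vectors to match, which they do not directly, so the generating-function route is cleaner.)

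The main obstacle, and the place to be careful, is the bookkeeping of the summation range and the index $r$: one must check that the remainder $r$ of $k$ modulo $s+1 = p$ in Theorem~\ref{ps1} is literally the same $r$ appearing in the corollary, that $\lfloor \frac{k}{s+1}\rfloor = \lfloor \frac{k}{p}\rfloor$, that the first-kind Stirling index $n+1-r-i(s+1)$ equals $n+1-(r+ip)$, and that the upper limit $\min\{\lfloor\frac{n-r}{p}\rfloor, \lfloor\frac{k}{p}\rfloor\}$ transfers verbatim. Once these identifications are made, substituting the congruence for $h_{\lfloor k/p\rfloor - i}$ into each summand of the formula from Theorem~\ref{ps1} and reducing the whole identity modulo $p$ yields exactly the claimed congruence, completing the proof.
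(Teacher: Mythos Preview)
Your proof is correct and follows exactly the paper's approach: specialize Theorem~\ref{ps1} at $s=p-1$ and reduce $h_m(1^p,\dots,n^p)$ to $h_m(1,\dots,n)={n+m\brace n}$ via Fermat's little theorem. Your worry about the termwise argument is unfounded, though: since $i^p\equiv i\pmod p$ for every integer $i$ and $h_m$ is a polynomial with integer coefficients, $h_m(1^p,\dots,n^p)\equiv h_m(1,\dots,n)\pmod p$ follows immediately by substituting congruent arguments---no generating-function detour is needed, and this one-line application of Fermat is all the paper actually invokes.
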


\subsection{The $\ell$-modular symmetric function}

Given $0\leq \ell <s+1 ,$ we can define the \emph{$\ell$-modular symmetric function} by
$$M_k^{(s,\ell)}(x_1,\dots ,x_n):=\sum _{\substack{j_1+\cdots +j_n=k\\ j_1,\cdots ,j_n \equiv 0,\ell \pmod {s+1}}}x_1^{j_1}\cdots x_n^{j_n}.$$
Using the definition above, we can extend the Theorem \ref{ps1} using the \emph{Stirling numbers of the first kind with higher level}, defined in \cite{Komatsu, Komatsu2}. Moreover, some applications of the Stirling numbers of higher level in special polynomials can be found in \cite{KomPit, Kom2}. 

Let $\Sg_n$ denote the set of permutations of the set  $[n]$. We will assume that permutations are expressed in \emph{standard cycle form}, i.e., minimal elements first within each cycle,
with cycles arranged left-to-right in ascending order of minimal elements. If $n, k \geq 0$, then let  $\Sg_{(n,k)}$ denote the set of permutations of $\Sg_n$  having exactly $k$ cycles. It is clear that $\Sg_n=\cup_{k=0}^n\Sg_{(n,k)}$ and  $|\Sg_{(n,k)}|={n \brack k}$. Given a permutation $\sigma$ in $\Sg_n$, let $\min(\sigma)$ denote the set of the minimal elements in each cycle of $\sigma$.  For example, if $\sigma=(1\, 4 \, 5)(2\, 3)(6)(7 \, 9)(8)$, then we have that $\min(\sigma)=\{1, 2, 6, 7, 8\}$.

 Given a positive integer $s$, let $\sttf2{n}{k}_s$ denote the number of ordered $s$-tuples $(\sigma_1, \sigma_2, \dots, \sigma_s)\in \Sg_{(n,k)}\times \Sg_{(n,k)} \times \cdots \times \Sg_{(n,k)}=\Sg_{(n,k)}^s$, such that
$\min(\sigma_1)=\min(\sigma_2)=\cdots=\min(\sigma_s).$

The sequence $\sttf2{n}{k}_s$ satisfies the following recurrence relation
\begin{equation}
\sttf2{n}{k}_s=\sttf2{n-1}{k-1}_s+(n-1)^s\sttf2{n-1}{k}_s, \label{generalrec2}
\end{equation}
with the initial conditions $\sttf2{0}{0}_s=1$ and $\sttf2{n}{0}_s=\sttf2{0}{n}_s=0$ hold for $n\geq 1$.

Given integers $n\geq 0$ and $s\geq 1$, let $\Omega_{n,s}(x)$ denote the polynomials
$$\Omega_{n,s}(x):=x(x+1^s)(x+2^s)\cdots(x+(n-1)^s), \quad  \text{with} \quad  \Omega_{0,s}(x)=1.$$
 The Stirling numbers of the first kind with higher level are the connection constants between the polynomials $(\Omega_{n,s}(x))_{n\geq 0}$ and the canonical basis $(x^n)_{n\geq 0}$. Indeed, if $n\geq 0$, then
\begin{equation}\label{sH1}
\Omega_{n,s}(x)=x(x+1^s)(x+2^s)\cdots(x+(n-1)^s)=\sum_{k=0}^n\sttf2{n}{k}_s x^k\,.
\end{equation}
From a similar argument as in Theorem \ref{ps1} and from \eqref{sH1} we can obtain the following  theorem.

\begin{theorem}
Let $n$ and $k$ be non negative integers  and $s+1>\ell \geq 0$, such that $\gcd(\ell, s+1)=1$. If $r$ is the remainder of $k \ell ^{-1}$ when divided by $s+1$, then the following equation holds
\begin{multline*}
    M^{(s,\ell)}_k(1,\dots ,n)\\=\sum _{i = 0}^{\min \{\lfloor \frac{n-r}{s+1}\rfloor ,\lfloor \frac{k}{s+1}\rfloor - \lfloor \frac{r \ell}{s+1}\rfloor \}}h_{\lfloor \frac{k}{s+1}\rfloor -\lfloor \frac{r \ell}{s+1}\rfloor -i \ell}(1^{s+1},\dots ,n^{s+1})\left [{n+1\brack n+1-r-i (s+1)}\right ]_\ell.
\end{multline*}
\end{theorem}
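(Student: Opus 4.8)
The plan is to imitate the proof of Theorem~\ref{ps1} almost verbatim, with the residue class $1$ replaced by $\ell$ and the ordinary Stirling numbers of the first kind replaced by their higher-level analogues. Starting from
\[
M^{(s,\ell)}_k(1,\dots,n)=\sum_{\substack{j_1+\cdots+j_n=k\\ j_1,\dots,j_n\equiv 0,\ell\ (s+1)}}1^{j_1}2^{j_2}\cdots n^{j_n},
\]
I would split each admissible exponent vector according to which coordinates lie in the residue class $\ell$: let $J\subseteq[n]$ be the set of indices $\alpha$ with $j_\alpha\equiv\ell\pmod{s+1}$, so that $j_\alpha=(s+1)b_\alpha+\ell$ for $\alpha\in J$ and $j_\alpha=(s+1)b_\alpha$ for $\alpha\notin J$, with all $b_\alpha\ge 0$. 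Summing the exponents gives $k=(s+1)\sum_{\alpha}b_\alpha+\ell|J|$, hence $\ell|J|\equiv k\pmod{s+1}$; since $\gcd(\ell,s+1)=1$, this pins down $|J|\pmod{s+1}$, namely $|J|\equiv k\ell^{-1}\equiv r\pmod{s+1}$, so $|J|=i(s+1)+r$ for some integer $i\ge 0$.

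Next I would factor each monomial as $\prod_{\alpha=1}^{n}\alpha^{j_\alpha}=\bigl(\prod_{\alpha\in J}\alpha\bigr)^{\ell}\prod_{\alpha=1}^{n}(\alpha^{s+1})^{b_\alpha}$ and reorganize the sum first over the subsets $J$ and then over the vectors $(b_\alpha)$ with $\sum_\alpha b_\alpha=(k-\ell|J|)/(s+1)$. The inner sum is, by definition of $h$, equal to $h_{(k-\ell|J|)/(s+1)}(1^{s+1},\dots,n^{s+1})$. Substituting $|J|=i(s+1)+r$ and using $\ell r\equiv k\pmod{s+1}$ — equivalently, that $k-\ell r$ is the multiple $(s+1)\bigl(\lfloor k/(s+1)\rfloor-\lfloor r\ell/(s+1)\rfloor\bigr)$ of $s+1$ — the subscript becomes $\lfloor k/(s+1)\rfloor-\lfloor r\ell/(s+1)\rfloor-i\ell$, as in the statement. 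Summing $\bigl(\prod_{\alpha\in J}\alpha\bigr)^{\ell}$ over all $J\subseteq[n]$ with $|J|=i(s+1)+r$ produces the elementary symmetric polynomial $e_{i(s+1)+r}(1^{\ell},2^{\ell},\dots,n^{\ell})$.

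It then remains to recognize $e_m(1^{\ell},\dots,n^{\ell})$, for $m=r+i(s+1)$, as a higher-level Stirling number of the first kind. Expanding $\Omega_{n+1,\ell}(x)=x(x+1^{\ell})(x+2^{\ell})\cdots(x+n^{\ell})=\prod_{j=0}^{n}(x+j^{\ell})$ and comparing with \eqref{sH1} in parameters $n+1$ and $\ell$, the coefficient of $x^{n+1-m}$ is $e_m(0^{\ell},1^{\ell},\dots,n^{\ell})=e_m(1^{\ell},\dots,n^{\ell})$ (the summands containing the variable $0$ vanish), whence $e_m(1^{\ell},\dots,n^{\ell})=\sttf2{n+1}{n+1-m}_\ell$. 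Taking $m=r+i(s+1)$ supplies the factor $\left[{n+1\brack n+1-r-i(s+1)}\right]_\ell$, and collecting the pieces gives the displayed identity. The upper limit $\min\{\lfloor(n-r)/(s+1)\rfloor,\ \lfloor k/(s+1)\rfloor-\lfloor r\ell/(s+1)\rfloor\}$ records the constraints $0\le i(s+1)+r\le n$ (otherwise the $e$-factor is $0$) and nonnegativity of the subscript of $h$; since $\ell\ge 1$ the latter actually forces $i\ell\le\lfloor k/(s+1)\rfloor-\lfloor r\ell/(s+1)\rfloor$, so for $\ell>1$ a few of the written summands may already be zero, which does no harm.

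Essentially everything here is the same symmetric-function bookkeeping as in Theorem~\ref{ps1}; the only point that genuinely requires care is the arithmetic involving $\ell$. One must check that $\gcd(\ell,s+1)=1$ is precisely what makes the residue of $|J|$ well-defined, that $k\equiv\ell r\pmod{s+1}$ holds for the chosen $r$, and that $(k-\ell(i(s+1)+r))/(s+1)=\lfloor k/(s+1)\rfloor-\lfloor r\ell/(s+1)\rfloor-i\ell$ is an exact equality of integers rather than merely a congruence. I expect this number-theoretic part, not the manipulation of the symmetric functions, to be where the attention goes.
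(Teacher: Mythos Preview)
Your proposal is correct and follows precisely the route the paper indicates: the paper's own ``proof'' consists of the single sentence that the result follows ``from a similar argument as in Theorem~\ref{ps1} and from~\eqref{sH1}'', and what you have written is exactly that similar argument spelled out in full, with the arithmetic involving $\ell$ (the well-definedness of $|J|\bmod (s+1)$ via $\gcd(\ell,s+1)=1$, and the identity $(k-\ell r)/(s+1)=\lfloor k/(s+1)\rfloor-\lfloor r\ell/(s+1)\rfloor$) handled carefully. Your observation that for $\ell>1$ the stated upper limit on $i$ may exceed the actual range of nonvanishing summands is also accurate and, as you note, harmless.
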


\section{The $s$-elementary symmetric function}
The \emph{$s$-elementary symmetric polynomial} is defined by the expression
\begin{align}\label{defele}
E_k^{(s)}(n)=\sum _{\substack{a_1+\cdots +a_n=k\\a_i\leq s}}x_1^{a_1}\cdots x_n^{a_n}.
\end{align}
An equivalent definition of this symmetric polynomial already exists in a paper by Bazeniar et al. \cite{BAZ}. For further properties of this symmetric function see \cite{MMerc}.

\begin{theorem}
If $s\equiv 1 \pmod 2,$ then for every positive integers $n$ and $k$ the following identity holds
$$\sum _{i=0}^k(-1)^iE_i^{(s)}(n)\cdot M^{(s)}_{k-i}(n)=0.$$
\end{theorem}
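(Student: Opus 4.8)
The plan is to prove the identity at the level of formal power series in $t$, by recognizing the left-hand side as the coefficient of $t^k$ in a product of two generating functions that collapses to $1$. First I would record the generating function for the $s$-elementary symmetric polynomials. Directly from \eqref{defele}, since each $a_i$ ranges over $0,1,\dots,s$,
$$\sum_{k\geq 0}E_k^{(s)}(n)t^k=\prod_{i=1}^n\bigl(1+x_it+\cdots+(x_it)^s\bigr)=\prod_{i=1}^n\frac{1-(x_it)^{s+1}}{1-x_it}.$$

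Next I would incorporate the alternating sign by the substitution $t\mapsto -t$, obtaining
$$\sum_{k\geq 0}(-1)^kE_k^{(s)}(n)t^k=\prod_{i=1}^n\frac{1-(-x_it)^{s+1}}{1+x_it}.$$
This is the step where the hypothesis $s\equiv 1\pmod 2$ enters: when $s$ is odd, $s+1$ is even, so $(-x_it)^{s+1}=(x_it)^{s+1}$, and hence
$$\sum_{k\geq 0}(-1)^kE_k^{(s)}(n)t^k=\prod_{i=1}^n\frac{1-(x_it)^{s+1}}{1+x_it}.$$

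Now I would multiply this by the generating function of Theorem \ref{teo1}:
$$\left(\sum_{k\geq 0}(-1)^kE_k^{(s)}(n)t^k\right)\left(\sum_{k\geq 0}M_k^{(s)}(n)t^k\right)=\prod_{i=1}^n\frac{1-(x_it)^{s+1}}{1+x_it}\cdot\frac{1+x_it}{1-(x_it)^{s+1}}=1.$$
By the Cauchy product rule the coefficient of $t^k$ on the left is exactly $\sum_{i=0}^k(-1)^iE_i^{(s)}(n)M_{k-i}^{(s)}(n)$, while on the right it is $0$ for every $k\geq 1$. Since $k$ is assumed to be a positive integer, this yields the claimed identity, and comparing the constant term ($k=0$) would recover the trivial relation $E_0^{(s)}(n)M_0^{(s)}(n)=1$.

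I do not expect a genuine obstacle here; the argument is a short generating-function computation. The only point that requires care — and the reason the parity assumption is stated — is verifying that $s$ odd is precisely what forces $(-x_it)^{s+1}=(x_it)^{s+1}$, so that the two products are exact reciprocals; for even $s$ one instead gets $\prod_{i=1}^n\frac{(1+(x_it)^{s+1})(1+x_it)}{(1+x_it)(1-(x_it)^{s+1})}$, which does not simplify to $1$, so the conclusion genuinely fails in that case.
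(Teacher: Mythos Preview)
Your proof is correct and follows essentially the same generating-function approach as the paper: both arguments identify $\sum_{k\ge 0}(-1)^kE_k^{(s)}(n)t^k$ with $\prod_{i=1}^n\frac{1-(x_it)^{s+1}}{1+x_it}$ (using the parity hypothesis to handle $(-x_it)^{s+1}$), observe that this is the reciprocal of the generating function for $M_k^{(s)}(n)$ from Theorem~\ref{teo1}, and read off the vanishing of the Cauchy-product coefficients for $k\ge 1$. The only difference is organizational---you start from the $E_k^{(s)}$ side and substitute $t\mapsto -t$, whereas the paper starts from the inverse of the $M_k^{(s)}$ generating function and recognizes it as $\sum_k E_k^{(s)}(n)(-t)^k$---but the computation is the same.
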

\begin{proof}
The inverse of the generating function in Theorem \ref{teo1}  is given by
\begin{align*}
  \prod _{i=1}^n\frac{1-(x_it)^{s+1}}{1+x_it}&=\prod _{i=1}^n\frac{1-(-x_it)^{s+1}}{1-(-x_it)}=\prod _{i=1}^n\left (1-x_it+(-x_it)^2+\cdots +(-x_it)^{s}\right).
\end{align*}
In each product, we can create any number in between $1$ and $s$. Hence
$$\sum _{k=0}^{n\cdot s}E_k^{(s)}(n)(-t)^k=\prod _{i=1}^n\frac{1-(x_it)^{s+1}}{1+x_it},$$
and the desired identity follows.
\end{proof}

We can express the modular symmetric function $M_k^{(s)}$  as convolutions involving the complete and elementary symmetric functions as follows.
\begin{theorem}\label{T3.3}
	Let $k$, $n$ and $s$ be positive integers and let $x_1,x_2,\ldots,x_n$ be independent
	variables. Then
	$$M_{k}^{(s)}(x_{1},x_{2},\ldots,x_{n})
	=\sum_{j=0}^{\lfloor k/s+1 \rfloor}  h_{j}(x_{1}^{s+1},x_{2}^{s+1},\ldots,x_{n}^{s+1})
	e_{k-(s+1)j}(x_{1},x_{2},\ldots,x_{n}).$$
\end{theorem}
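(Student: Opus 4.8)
The plan is to derive the identity directly from the product formula for the generating function of $M_k^{(s)}(n)$ established in Theorem~\ref{teo1}. Starting from
\[
\sum_{k\geq 0} M_k^{(s)}(x_1,\dots,x_n)\,t^k=\prod_{i=1}^n\frac{1+x_i t}{1-(x_i t)^{s+1}},
\]
I would factor the right-hand side as
\[
\left(\prod_{i=1}^n(1+x_i t)\right)\left(\prod_{i=1}^n\frac{1}{1-(x_i t)^{s+1}}\right).
\]
By the classical generating function for the elementary symmetric polynomials recalled in the introduction, the first factor equals $\sum_{m\geq 0}e_m(x_1,\dots,x_n)\,t^m$. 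For the second factor, substituting $x_i\mapsto x_i^{s+1}$ and $t\mapsto t^{s+1}$ in the classical generating function for the complete homogeneous symmetric polynomials gives $\prod_{i=1}^n(1-x_i^{s+1}t^{s+1})^{-1}=\sum_{j\geq 0}h_j(x_1^{s+1},\dots,x_n^{s+1})\,t^{(s+1)j}$.

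Next I would multiply these two power series and extract the coefficient of $t^k$. A product $t^m\cdot t^{(s+1)j}$ contributes to $t^k$ exactly when $m=k-(s+1)j$; since $m\geq 0$ this forces $0\leq j\leq\lfloor k/(s+1)\rfloor$, and for $m>n$ the contribution vanishes because $e_m(x_1,\dots,x_n)=0$, so $\lfloor k/(s+1)\rfloor$ is precisely the correct upper limit. Comparing the coefficients of $t^k$ on both sides then yields the asserted convolution formula. There is essentially no genuine obstacle here; the only point deserving a line of care is the bookkeeping of the summation range together with the convention $e_m=0$ for $m>n$.

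As an alternative, which I would include as a remark rather than the main argument, one can prove the identity bijectively from the tiling interpretation of Theorem~\ref{T4.5}. In a tiling counted by $M_k^{(s)}(n)$, the number $a_i$ of black squares of weight $x_i$ satisfies $a_i\equiv 0,1\pmod{s+1}$, hence admits a unique decomposition $a_i=(s+1)b_i+\epsilon_i$ with $\epsilon_i\in\{0,1\}$, so that $x_i^{a_i}=(x_i^{s+1})^{b_i}x_i^{\epsilon_i}$. Writing $S=\{i:\epsilon_i=1\}$ and $j=\sum_i b_i$, one has $k=(s+1)j+|S|$; as the tiling varies, $\prod_{i\in S}x_i$ ranges over all $|S|$-subsets of $[n]$, producing the factor $e_{k-(s+1)j}(x_1,\dots,x_n)$, while the multiset of indices $i$ with $b_i>0$ weighted by $\prod_i (x_i^{s+1})^{b_i}$ produces $h_j(x_1^{s+1},\dots,x_n^{s+1})$. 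Summing over $j$ recovers the identity. I expect the generating-function proof to be the cleanest to write up, with the combinatorial version offered afterwards.
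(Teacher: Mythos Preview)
Your proposal is correct and follows essentially the same route as the paper: factor the generating function of Theorem~\ref{teo1} into $\prod_i(1+x_it)$ and $\prod_i(1-(x_it)^{s+1})^{-1}$, identify these with the generating series for $e_m$ and for $h_j(x_1^{s+1},\dots,x_n^{s+1})$, multiply, and compare coefficients of $t^k$. Your additional care with the summation bound and the bijective remark via the tiling model go slightly beyond what the paper writes, but the core argument is identical.
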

\begin{proof}
\allowdisplaybreaks{
According to \eqref{egf}, we have
\begin{align*}
& \sum_{k=0}^\infty M_k^{(s)} (x_{1},x_{2},\ldots,x_{n}) t^k\\
& \qquad=\left(\prod_{i=1}^{n}\frac{1}{1-(x_{i}t)^{s+1}}\right)\left( \prod_{i=1}^{n} (1+x_{i}t)\right)\\
& \qquad=\left(\sum_{j=0}^\infty  h_{j}(x_{1}^{s+1},x_{2}^{s+1},\ldots,x_{n}^{s+1})(t)^{(s+1)j} \right)\left(\sum_{j=0}^\infty e_{j}(x_{1},x_{2},\ldots,x_{n})t^{j}\right)\\
& \qquad=\sum_{k=0}^\infty \left(\sum_{j=0}^{\lfloor k/s+1 \rfloor}h_{j}(x_{1}^{s+1},x_{2}^{s+1},\ldots,x_{n}^{s+1})
e_{k-(s+1)j}(x_{1},x_{2},\ldots,x_{n})\right)t^{k}.
\end{align*}
As required.}
\end{proof}
Inspired by Theorem \ref{T3.3}, we provide the following generalization.
\begin{theorem}\label{T3.4}
Let $k$, $n$ and $s$ be three positive integers and let $x_1,x_2,\ldots,x_n$ be independent
	variables. Then
	$$
	h_k(x_1^s,x_2^s,\ldots,x_n^s)
	= \sum_{j=0}^{k(s+1)} (-1)^{j} h_j(x_1,x_2,\ldots,x_n) M_{k(s+1)-j}^{(s)}(x_1,x_2,\ldots,x_n)
	$$
    and
    $$
	e_k(x_1,x_2,\ldots,x_n)
	= \sum_{j=0}^{\lfloor k/s+1 \rfloor} (-1)^{j} e_j(x_1,x_2,\ldots,x_n) M_{k-j(s+1)}^{(s)}(x_1,x_2,\ldots,x_n).
	$$
	If $k$ is not congruent to  $0$ modulo $s+1$, then
	$$
	\sum_{j=0}^k (-1)^j h_j(x_1,x_2,\ldots,x_n) M_{k-j}^{(s)}(x_1,x_2,\ldots,x_n) = 0.
	$$
\end{theorem}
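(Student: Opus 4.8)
The plan is to derive all three identities from the generating function in Theorem \ref{teo1} by forming suitable products of power series and comparing coefficients of $t^k$, exactly in the style of the proof of Theorem \ref{T3.3}. The central algebraic fact I would isolate first is that
\[
\left(\sum_{j\geq 0} M_j^{(s)}(x_1,\ldots,x_n)\,t^j\right)\left(\sum_{j\geq 0} (-1)^j h_j(x_1,\ldots,x_n)\,t^j\right)
= \prod_{i=1}^n \frac{1+x_it}{1-(x_it)^{s+1}}\cdot\prod_{i=1}^n (1-x_it)
= \prod_{i=1}^n \frac{1-(x_it)^2}{1-(x_it)^{s+1}},
\]
using $\sum_{j\geq0}(-1)^j h_j(x_1,\ldots,x_n)t^j = \prod_i (1-x_it)$. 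When $s$ is odd this last product telescopes further, but in general I would rewrite the numerator and denominator in terms of the exponents appearing: $1-(x_it)^2 = (1-x_it)(1+x_it)$ is not the cleanest route. Instead I would observe directly that
\[
\prod_{i=1}^n\frac{1-(x_it)^2}{1-(x_it)^{s+1}} \;=\; \prod_{i=1}^n\bigl(1+(x_it)^2+(x_it)^3+\cdots\bigr)\text{-type expansion},
\]
and more usefully factor it as a product over residues. The key observation is that $\dfrac{1-u^2}{1-u^{s+1}}$, for $s$ odd, equals $\dfrac{1}{1-u^2}\cdot\dfrac{(1-u^2)^2}{1-u^{s+1}}$ — so this needs care; the robust approach is simply: $\dfrac{1-(x_it)^2}{1-(x_it)^{s+1}} = \bigl(1-(x_it)^2\bigr)\sum_{r\geq 0}(x_it)^{(s+1)r}$, whose $t$-expansion has nonzero coefficients only in degrees $\equiv 0$ or $2\pmod{s+1}$ (when $s+1>2$). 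In particular, for $k\not\equiv 0\pmod{s+1}$ and $k$ also $\not\equiv 2$, the coefficient of $t^k$ vanishes — but the theorem claims vanishing for \emph{all} $k\not\equiv 0$, so I must be more careful: the correct factorization to use is the one giving $M^{(s)}$ back. Re-examining: $\dfrac{1+x_it}{1-(x_it)^{s+1}} = \sum_{\ell\equiv 0,1 (s+1)}(x_it)^\ell$, so multiplying by $(1-x_it) = $ the inverse of $\sum h_j$-generating-function per variable is \emph{not} what we want; rather $\prod(1-x_it)^{-1} = \sum h_j t^j$, so the alternating sum $\sum(-1)^j h_j t^j = \prod(1-x_it)$, and $M^{(s)}$-gf times $\prod(1-x_it) = \prod\frac{(1+x_it)(1-x_it)}{1-(x_it)^{s+1}} = \prod\frac{1-(x_it)^2}{1-(x_it)^{s+1}}$. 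For $s+1 > 2$ expand $\frac{1-u^2}{1-u^{s+1}} = (1-u^2)(1+u^{s+1}+u^{2(s+1)}+\cdots)$: the exponents present are $\{(s+1)r, (s+1)r+2 : r\geq 0\}$, so degrees $k$ with $k\not\equiv 0,2\pmod{s+1}$ give coefficient $0$. Thus the stated third identity as written seems to require $k\not\equiv 0\pmod{s+1}$ to be strengthened, \emph{unless} I instead multiply by $\prod\frac{1}{1+x_it}$; then $M^{(s)}$-gf $\cdot\prod\frac{1}{1+x_it} = \prod\frac{1}{1-(x_it)^{s+1}} = \sum_j h_j(x_1^{s+1},\ldots)t^{(s+1)j}$, and $\prod\frac{1}{1+x_it} = \sum_j(-1)^j h_j t^j$ — this is the route that yields exactly a vanishing coefficient whenever $k\not\equiv 0\pmod{s+1}$, and the clean identity $\sum_{j}(-1)^j h_j M^{(s)}_{k(s+1)-j} = h_k(x_1^{s+1},\ldots)$ — note the exponent in the statement reads $x_i^s$, which I suspect should be $x_i^{s+1}$, and I would flag and correct this. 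So the first step is to fix the statement to $h_k(x_1^{s+1},\ldots,x_n^{s+1})$.

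With that correction, the first identity follows by writing $\sum_k\bigl(\sum_{j=0}^{k}(-1)^j h_j M^{(s)}_{k-j}\bigr)t^k = \bigl(\sum_j(-1)^jh_j t^j\bigr)\bigl(\sum_m M^{(s)}_m t^m\bigr) = \prod_i\frac{1}{1+x_it}\cdot\prod_i\frac{1+x_it}{1-(x_it)^{s+1}} = \prod_i\frac{1}{1-(x_it)^{s+1}} = \sum_k h_k(x_1^{s+1},\ldots,x_n^{s+1})t^{(s+1)k}$, and comparing coefficients of $t^{k(s+1)}$ gives the first formula while comparing coefficients of $t^k$ for $k\not\equiv 0\pmod{s+1}$ gives the third. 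For the second identity, I would instead multiply the $M^{(s)}$-generating function by $\prod_i(1-x_i^{s+1}t^{s+1})$, i.e. by $\sum_j (-1)^j e_j(x_1^{s+1},\ldots,x_n^{s+1})t^{(s+1)j}$ — wait, the statement uses $e_j(x_1,\ldots,x_n)$ with argument $t^{?}$; the correct pairing is: $M^{(s)}$-gf $\cdot\prod_i(1-(x_it)^{s+1}) = \prod_i(1+x_it) = \sum_j e_j(x_1,\ldots,x_n)t^j$, and $\prod_i(1-(x_it)^{s+1}) = \sum_j(-1)^j e_j(x_1^{s+1},\ldots,x_n^{s+1})t^{(s+1)j}$; but the theorem writes $e_j(x_1,\ldots,x_n)$, so again I would flag that the arguments on the right should be $x_i^{s+1}$, after which coefficient comparison of $t^k$ yields $e_k(x_1,\ldots,x_n) = \sum_{j=0}^{\lfloor k/(s+1)\rfloor}(-1)^j e_j(x_1^{s+1},\ldots,x_n^{s+1})M^{(s)}_{k-j(s+1)}(x_1,\ldots,x_n)$.

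The main obstacle, then, is not the generating-function manipulation — which is entirely routine once the right multiplier is chosen — but reconciling the identities \emph{as printed} with what the algebra actually produces; I expect the printed versions contain typos in the arguments of $h_j$ and $e_j$ on the right-hand sides ($x_i$ versus $x_i^{s+1}$), and the proof should either silently use the corrected forms or explicitly note the correction. Once that is settled, each of the three displays is a one-line consequence of multiplying the series in Theorem \ref{teo1} by the reciprocal series $\prod_i(1+x_it)^{-1} = \sum(-1)^jh_jt^j$ (first and third identities) respectively $\prod_i(1-(x_it)^{s+1}) = \sum(-1)^je_j(x_i^{s+1})t^{(s+1)j}$ (second identity), expanding the resulting product $\prod_i(1-(x_it)^{s+1})^{-1}$ or $\prod_i(1+x_it)$ via the standard generating functions for $h$ and $e$, and extracting the coefficient of $t^k$.
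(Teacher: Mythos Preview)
Your proposal is correct and follows essentially the same route as the paper: multiply the generating function of Theorem~\ref{teo1} by $\prod_i(1+x_it)^{-1}$ (respectively by $\prod_i(1-(x_it)^{s+1})$), recognize the resulting products as $\sum_k h_k(x_1^{s+1},\ldots,x_n^{s+1})t^{k(s+1)}$ (respectively $\sum_k e_k(x_1,\ldots,x_n)t^k$), and compare coefficients. Your suspicion about typos is also borne out: the paper's own proof uses $h_k(x_1^{s+1},\ldots,x_n^{s+1})$ on the left of the first identity and $e_j(x_1^{s+1},\ldots,x_n^{s+1})$ on the right of the second, exactly as you inferred.
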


\begin{proof}
The relation given in Theorem \ref{teo1} can be rewritten as
	$$\prod_{i=1}^n \frac{1}{1+x_it} \sum_{k=0}^\infty M_k^{(s)}(x_1,x_2,\ldots,x_n) t^k
	= \prod_{i=1}^n \frac{1}{1-(x_it)^{s+1}}$$
	or
	$$\prod_{i=1}^n (1-(x_i t)^{s+1}) \sum_{k=0}^\infty M_k^{(s)}(x_1,x_2,\ldots,x_n) t^k
	= \prod_{i=1}^n \big(1+(x_it)\big).$$
	Thus we deduce that
	\begin{align*}
	& \sum_{k=0}^\infty h_k(x_1^{s+1},x_2^{s+1},\ldots,x_n^{s+1}) t^{k(s+1)}\\
	& \qquad = \left( \sum_{k=0}^\infty (-1)^k h_k(x_1,x_2,\ldots,x_n) t^k \right)
	\left( \sum_{k=0}^\infty M_k^{(s)}(x_1,x_2,\ldots,x_n) t^k \right)
	\end{align*}
	and
	\begin{align*}
	& \sum_{k=0}^\infty e_k(x_1,x_2,\ldots,x_n) t^{k}\\
	& \qquad = \left( \sum_{k=0}^\infty (-1)^k e_k(x_1^{s+1},x_2^{s+1},\ldots,x_n^{s+1}) t^{k(s+1)} \right)
	\left( \sum_{k=0}^\infty M_k^{(s)}(x_1,x_2,\ldots,x_n) t^k \right).
	\end{align*}
	The proof follows easily by comparing the coefficients of $t^{ks}$ on both sides of these equations.	
\end{proof}

The following result allows us to express a convolution of the modular symmetric function $M_k^{(s)}$  as convolutions involving the complete and elementary symmetric functions.

\begin{theorem}
Let $k$, $n$ and $s$ be three positive integers and let $x_1,x_2,\ldots,x_n$ be independent
	variables. Then
$$
\sum_{j=0}^{k}e_j(x_1,x_2,\ldots,x_n)h_{k-j}(x_1,x_2,\ldots,x_n)=\sum_{j=0}^{k}M^{(s)}_j(x_1,x_2,\ldots,x_n)E^{(s)}_{k-j}(x_1,x_2,\ldots,x_n).
$$
 \end{theorem}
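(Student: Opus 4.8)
The plan is to prove the identity by comparing coefficients in a single generating-function identity, exactly in the spirit of the proofs of Theorems \ref{T3.3} and \ref{T3.4}. The key observation is that both sides of the claimed equality are the coefficient of $t^k$ in the product $\prod_{i=1}^n \frac{1+x_it}{1-x_it}$, viewed in two different factorizations.

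First I would record the four relevant generating functions. From the definitions in Section \ref{S1} we have $\sum_{j\ge 0} e_j(x_1,\dots,x_n)t^j = \prod_{i=1}^n (1+x_it)$ and $\sum_{j\ge 0} h_j(x_1,\dots,x_n)t^j = \prod_{i=1}^n \frac{1}{1-x_it}$, so the Cauchy product gives
\[
\left(\sum_{j\ge 0} e_j(x_1,\dots,x_n)t^j\right)\left(\sum_{j\ge 0} h_j(x_1,\dots,x_n)t^j\right) = \prod_{i=1}^n \frac{1+x_it}{1-x_it}.
\]
Next, directly from \eqref{defele} the $s$-elementary symmetric polynomial has generating function
\[
\sum_{j\ge 0} E_j^{(s)}(x_1,\dots,x_n)t^j = \prod_{i=1}^n\bigl(1+x_it+(x_it)^2+\cdots+(x_it)^s\bigr) = \prod_{i=1}^n \frac{1-(x_it)^{s+1}}{1-x_it},
\]
and Theorem \ref{teo1} gives $\sum_{j\ge 0} M_j^{(s)}(x_1,\dots,x_n)t^j = \prod_{i=1}^n \frac{1+x_it}{1-(x_it)^{s+1}}$.

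Then I would multiply the last two expressions: the factors $1-(x_it)^{s+1}$ cancel, yielding
\[
\left(\sum_{j\ge 0} M_j^{(s)}(x_1,\dots,x_n)t^j\right)\left(\sum_{j\ge 0} E_j^{(s)}(x_1,\dots,x_n)t^j\right) = \prod_{i=1}^n \frac{1+x_it}{1-x_it}.
\]
Since both displayed products equal $\prod_{i=1}^n \frac{1+x_it}{1-x_it}$, the two power series in $t$ coincide, and extracting the coefficient of $t^k$ from each side (using the Cauchy-product expansions) gives exactly
\[
\sum_{j=0}^k e_j h_{k-j} = \sum_{j=0}^k M_j^{(s)} E_{k-j}^{(s)},
\]
which is the assertion. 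There is essentially no obstacle here; the only point requiring a word of justification is the generating function for $E_k^{(s)}$, which I would note follows immediately from expanding \eqref{defele} and summing the finite geometric series $1+x_it+\cdots+(x_it)^s$. (One may equally cite the displayed identity in the proof of the $s$-elementary/modular annihilation theorem above, substituting $t\mapsto -t$.) Everything else is the same formal power-series bookkeeping already used repeatedly in this section, so the write-up will be short.
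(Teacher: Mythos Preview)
Your proof is correct. The paper actually states this theorem without proof, but your generating-function argument is exactly in the spirit of the proofs of Theorems \ref{T3.3} and \ref{T3.4} that surround it, and is presumably the intended justification: both sides are the coefficient of $t^k$ in $\prod_{i=1}^n\frac{1+x_it}{1-x_it}$, obtained from the two factorizations $\bigl(\prod_i(1+x_it)\bigr)\bigl(\prod_i\frac{1}{1-x_it}\bigr)$ and $\bigl(\prod_i\frac{1+x_it}{1-(x_it)^{s+1}}\bigr)\bigl(\prod_i\frac{1-(x_it)^{s+1}}{1-x_it}\bigr)$.
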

We can, now define the modular $s$-Stirling numbers of the first kind by the following equality
$${n+1\brack k+1}^{(s)}=(n!)^{s}E_k^{(s)}\left(1,\frac{1}{2},\dots ,\frac{1}{n}\right).$$
These numbers were introduced independently by Ahmia et al. \cite{ABA}. They interpreted ${n\brack k}^{(s)}$ as the number of $s$-tuple
permutations of $[n]$ having together $k$ cycles. Inspired by the combinatorial interpretation given in Theorem \ref{comin} for the modular $s$-Stirling numbers of the second kind, we
give in the following  theorem another  combinatorial interpretation of ${n\brack k}^{(s)}$.
\begin{theorem}\label{teosti1}
Let $n$ and $k$ be non negative integers  and $s>0$. The $s$-modular Stirling numbers of the first kind ${n+1\brack k+1}^{(s)}$ count the number of set partitions $\pi \in \Pi (n(s+1)-k,n)$ such that $d(\pi)=(d_1,\dots ,d_n)$ has the property that $d_i\leq s$ for every $1\leq i \leq n$.
\end{theorem}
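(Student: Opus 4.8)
The plan is to reduce the statement to the combinatorial meaning of the consecutive-difference vector $d(\pi)$ that was extracted from the algorithm on Page~\pageref{Alg}, after a short algebraic rewriting of the defining formula
$${n+1\brack k+1}^{(s)}=(n!)^{s}E_k^{(s)}\!\left(1,\tfrac{1}{2},\dots ,\tfrac{1}{n}\right).$$
First I would unfold this using the definition \eqref{defele}: since $E_k^{(s)}(1,\tfrac{1}{2},\dots,\tfrac{1}{n})=\sum \prod_{i=1}^n i^{-a_i}$, the sum being over all tuples $(a_1,\dots,a_n)$ with $0\le a_i\le s$ and $a_1+\cdots+a_n=k$, and since $(n!)^s=\prod_{i=1}^n i^s$, multiplying through gives
$${n+1\brack k+1}^{(s)}=\sum_{\substack{a_1+\cdots+a_n=k\\ 0\le a_i\le s}}\ \prod_{i=1}^n i^{\,s-a_i}.$$
The change of variables $d_i:=s-a_i$ is, coordinatewise, a bijection of $\{0,1,\dots,s\}$ with itself, and it turns the condition $\sum a_i=k$ into $\sum d_i=ns-k=n(s+1)-k-n$. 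Hence
$${n+1\brack k+1}^{(s)}=\sum_{\substack{d_1+\cdots+d_n=n(s+1)-k-n\\ 0\le d_i\le s}}\ \prod_{i=1}^n i^{\,d_i}.$$

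Next I would invoke the basic count behind the algorithm on Page~\pageref{Alg}, which was already used in the proof of Theorem~\ref{comin}: for fixed integers $N\ge K\ge 1$ and a fixed vector $(d_1,\dots,d_K)$ of nonnegative integers with $d_1+\cdots+d_K=N-K$, the number of $\pi\in\Pi(N,K)$ having $d(\pi)=(d_1,\dots,d_K)$ equals $\prod_{i=1}^K i^{\,d_i}$. Indeed, prescribing $d(\pi)$ is the same as prescribing the block minima $m_1=1<m_2<\cdots<m_K$, and then each of the $d_i$ elements lying strictly between $m_i$ and $m_{i+1}$ (and, for $i=K$, strictly above $m_K$) may be assigned freely to any one of the $i$ blocks whose minimum is smaller, giving $\prod_i i^{d_i}$ partitions. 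Applying this with $N=n(s+1)-k$ and $K=n$, so that $N-K=n(s+1)-k-n$ exactly matches the constraint in the displayed sum, identifies the right-hand side above with the number of $\pi\in\Pi(n(s+1)-k,\,n)$ whose difference vector $d(\pi)=(d_1,\dots,d_n)$ satisfies $d_i\le s$ for every $1\le i\le n$. This is precisely the assertion of the theorem.

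I expect no serious obstacle; the only point that needs a careful word is that the constraint $a_i\le s$ built into $E_k^{(s)}$ must be matched against the constraint $d_i\le s$ imposed on \emph{all} coordinates of $d(\pi)$ --- in particular on the last one $d_n=N-m_n$, which carries no restriction for a generic partition --- and this is automatic because $d_i=s-a_i$ is a self-bijection of $\{0,\dots,s\}$. Beyond that, the argument is pure bookkeeping, and the main thing to get right is the index shift $N=n(s+1)-k$ linking the number of elements being partitioned to $k$.
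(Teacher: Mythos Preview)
Your proof is correct and follows essentially the same route as the paper: the paper first records the identity $(n!)^{s}E_k^{(s)}(1,\tfrac12,\dots,\tfrac1n)=E_{ns-k}^{(s)}(1,\dots,n)$ and then appeals to the argument of Theorem~\ref{comin}, which is exactly your change of variables $d_i=s-a_i$ followed by the $\prod_i i^{d_i}$ count from the algorithm on Page~\pageref{Alg}. Your write-up is simply a more explicit unpacking of those two steps.
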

\begin{proof}
From \eqref{defele} we have the equality
$$(n!)^sE_k^{(s)}\left(1,\frac{1}{2},\dots ,\frac{1}{n}\right)=E_{n\cdot s-k}^{(s)}(1,\dots ,n).$$
Using the same argument as in Theorem \ref{comin}, we obtain the desired result.
\end{proof}

From Theorem \ref{teosti1} (taking $s=1$) we obtain a probably new combinatorial interpretation for the Stirling numbers of the first kind in terms of set partitions. Indeed, ${n+1 \brack k+1}$ enumerates the set partitions in  $\Pi (2n-k,n)$, such that the vector  $d(\pi)=(d_1,\dots ,d_n)$ has the property that $d_i\leq 1$ for every $1\leq i \leq n$. For example, ${3+1 \brack 1+1}=11$, the partitions being
\begin{align*}
&1/23/45, \quad 1/235/4, \quad  12/3/45, \quad 13/2/45, \quad 12/34/5, \quad 12/35/4, \\
&135/2/4, \quad 15/23/4, \quad 124/3/5, \quad 125/3/4, \quad 13/25/4.
\end{align*}

The modular $s$-Stirling numbers of the first kind satisfy the following recurrence  relation
\begin{align}
{n\brack k}^{(s)}=\sum _{\ell =0}^s{n-1\brack k-(s-\ell)}^{(s)}\cdot (n-1)^\ell,
    \label{recst1mod}
\end{align}
where ${0\brack 1-s}^{(s)}=1,{n\brack k}^{(s)}=0$ if $k<1-s$. See also, Ahmia et al. \cite{ABA}.  From this relation we can give the following combinatorial interpretation.
\begin{theorem}
Let $n$ and $k$ be non negative integers  and $s>0$.  Consider the set of $s$-tuples of permutations $(\sigma _1,\sigma _2,\dots , \sigma _s)$ such that $\min(\sigma _i)\subseteq \min (\sigma_{i-1})$ for all $i>1$ and $\sum _{j=1}^s|\min (\sigma _j)|=k+s-1.$ Then the number of such elements equals ${n\brack k}^{(s)}$.
\end{theorem}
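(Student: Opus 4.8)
The plan is to show that the quantity being counted obeys the same recurrence \eqref{recst1mod} and the same boundary conditions as ${n\brack k}^{(s)}$, whence equality follows by induction on $n$. Throughout, all $\sigma_j$ are permutations of $[n]$ in standard cycle form; let $T(n,k)$ denote the number of $s$-tuples $(\sigma_1,\dots,\sigma_s)$ with $\min(\sigma_i)\subseteq\min(\sigma_{i-1})$ for $2\le i\le s$ and $\sum_{j=1}^{s}|\min(\sigma_j)|=k+s-1$. I would first dispose of the base case $n=0$: the only tuple is the empty one, whose $\min(\sigma_j)$ are all empty, so $T(0,k)=1$ if $k=1-s$ and $T(0,k)=0$ otherwise; more generally $T(n,k)=0$ for $k<1-s$, since the total $\sum_j|\min(\sigma_j)|=k+s-1$ is nonnegative (and is at least $s$ once $n\ge1$). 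This matches the initial conditions attached to \eqref{recst1mod}. As a sanity check, when $s=1$ the nesting hypothesis is vacuous and $T(n,k)$ counts permutations of $[n]$ with $k$ cycles, i.e.\ ${n\brack k}$.

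For the inductive step, the idea is to delete, and then reinsert, the largest letter $n$. Since $n$ is maximal in $[n]$, for each $j$ one has $n\in\min(\sigma_j)$ exactly when $n$ forms the singleton cycle $(n)$ in $\sigma_j$; otherwise $n$ lies in a cycle of length $\ge2$ and $n\notin\min(\sigma_j)$. The chain $\min(\sigma_s)\subseteq\cdots\subseteq\min(\sigma_1)$ then forces the set of indices $\{\,j:n\in\min(\sigma_j)\,\}$ to be an initial segment $\{1,\dots,m\}$ with $0\le m\le s$. Deleting $n$ from each $\sigma_j$ — erasing the cycle $(n)$ when $j\le m$, splicing $n$ out of its cycle when $j>m$ — gives permutations $\sigma_j'$ of $[n-1]$ with $\min(\sigma_j')=\min(\sigma_j)\setminus\{n\}$; the nesting survives and $\sum_j|\min(\sigma_j')|=(k-m)+s-1$. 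Conversely, given $m$ and a tuple counted by $T(n-1,k-m)$, one rebuilds a tuple for $T(n,k)$ by adjoining $(n)$ to each of $\sigma_1',\dots,\sigma_m'$ and by inserting $n$ directly after one of the $n-1$ letters of $[n-1]$ in each of $\sigma_{m+1}',\dots,\sigma_s'$ — altogether $(n-1)^{s-m}$ choices — and I would check that the nesting is restored by writing $\min(\sigma_j)=\min(\sigma_j')\cup A_j$ with $A_j=\{n\}$ for $j\le m$ and $A_j=\varnothing$ for $j>m$, and noting that $j\mapsto\min(\sigma_j')$ and $j\mapsto A_j$ are both inclusion-decreasing. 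These two operations being mutually inverse for each fixed $m$, summing over $m$ gives
\[
T(n,k)=\sum_{m=0}^{s}(n-1)^{s-m}\,T(n-1,k-m)=\sum_{\ell=0}^{s}(n-1)^{\ell}\,T(n-1,k-(s-\ell)),
\]
which is exactly \eqref{recst1mod}; induction on $n$ then yields $T(n,k)={n\brack k}^{(s)}$.

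I expect the only delicate point to be the bookkeeping of the nesting condition around the removal and reinsertion of $n$: namely, recognizing that the indices $j$ with $n$ a fixed point of $\sigma_j$ must form an initial segment $\{1,\dots,m\}$, and verifying that reinserting $n$ into precisely $\sigma_{m+1}',\dots,\sigma_s'$ (and into none of the first $m$ permutations) preserves the whole chain of inclusions. Everything else — the count $(n-1)^{s-m}$ of reinsertions, which on setting $\ell=s-m$ becomes the factor $(n-1)^\ell$ of \eqref{recst1mod}, and the boundary checks — is routine.
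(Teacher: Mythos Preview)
Your proposal is correct and follows essentially the same approach as the paper: both verify that the tuple count satisfies the recurrence \eqref{recst1mod} (with matching boundary data) by classifying how the largest element $n$ behaves, observing that the nesting forces the fixed-point indices to be an initial segment, and then counting the $(n-1)^{s-m}$ reinsertions. Your write-up is simply more explicit about the deletion/reinsertion bijection and the preservation of the chain of inclusions than the paper's sketch.
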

\begin{proof}
By the recursion given in \eqref{recst1mod}, one has that
\begin{align*}
  {n\brack k}^{(s)}&=\sum _{\ell=0}^s{n-1\brack k-(s-\ell)}^{(s)}(n-1)^{\ell}=\sum _{\ell=0}^s{n-1\brack k-\ell}^{(s)}(n-1)^{s-\ell}.
\end{align*}
This recurrence corresponds to choosing if the last element of each permutation, i.e., $n$ is going to be fixed or not. Call $\ell$ the number of permutations where $n$ is going to be a fixed point. By the condition we imposed in the tuple, these have to be the first $\ell$ elements of the $s$-tuple. For the remaining $s-\ell$ elements of the tuple, we have to choose an element from the remaining $n-1$ to have $n$ as a preimage in each one of the permutations. We can do this in $(n-1)^{s-\ell}$ ways. This shows the claim because the initial condition is in $1-s,$ meaning we need $k-(1-s)=k+s-1$ cycles to fill.
\end{proof}

For example, take $n=3$ and $k=4,$ the following correspond to the $15$ tuples counted by ${3 \brack 4}^{(3)}$ having in total $3+4-1=6$ cycles.

$$( (1)(2)(3) , (1)(2,3) , (1,2,3) ),\quad
( (1)(2)(3) , (1)(2,3) , (1,3,2) ), $$
$$( (1)(2)(3) , (1,2)(3) , (1,2,3) ),\quad
( (1)(2)(3) , (1,2)(3) , (1,3,2) ), $$
$$( (1)(2)(3) , (1,3)(2) , (1,2,3) ),\quad
( (1)(2)(3) , (1,3)(2) , (1,3,2) ), $$
$$( (1)(2,3) , (1)(2,3) , (1)(2,3) ),\quad
( (1)(2,3) , (1)(2,3) , (1,3)(2) ), $$
$$( (1)(2,3) , (1,3)(2) , (1)(2,3) ),\quad
( (1)(2,3) , (1,3)(2) , (1,3)(2) ), $$
$$( (1,2)(3) , (1,2)(3) , (1,2)(3) ),\quad
( (1,3)(2) , (1)(2,3) , (1)(2,3) ), $$
$$( (1,3)(2) , (1)(2,3) , (1,3)(2) ),\quad
( (1,3)(2) , (1,3)(2) , (1)(2,3) ), $$
$$( (1,3)(2) , (1,3)(2) , (1,3)(2)).$$


\begin{thebibliography}{10}

\bibitem{MMerc} M.~Ahmia and M.~Merca, A generalization of complete and elementary symmetric functions, arxiv.2005.01447 (2020), 1--24.

\bibitem{ABA} M.~Ahmia, A.~Bazeniar, and S.~Amrouche, Generalized Stirling numbers of the first kind and symmetric functions. Submitted.

\bibitem{BAZ} A.~Bazeniar, M.~Ahmia, and H.~Belbachir, Connection between bi$^{s}$nomial coefficients with their analogs and symmetric functions, Turk J Math. \textbf{42} (2018),  807--818.

\bibitem{Caicedo} J.~B.~Caicedo, V.~H.~Moll, J.~L.~Ram\'irez, and D.~Villamizar, Extensions of set partitions and permutations, Electron. J. Comb. \textbf{26}(2) (2019), Article P2.20.

\bibitem{DW} S.~Doty, and G.~Walker, Modular symmetric functions and irreducible modular representations of general linear groups, J. Pure Appl. Algebra \textbf{82} (1992), 1--26.



\bibitem{Egge} E.~S.~Egge, \emph{An Introduction to Symmetric Functions and Their Combinatorics}. American Mathematical Society, 2019.

\bibitem{FM} H.~Fu and Z.~Mei, Truncated homogeneous symmetric functions, Linear Multilinear Algebra, In press. 

\bibitem{Grinberg} D.~Grinberg,  The Petrie symmetric functions, S\'em. Lothar. Combin. Proceedings of the 32nd Conference on Formal Power Series and Algebraic Combinatorics,  \textbf{84}B (2020),  Article \#61.



\bibitem{Kom2} T.~Komatsu, Stirling numbers with level 2 and poly-Bernoulli numbers with level 2, To appear in Publ. Math. Debrecen. 

\bibitem{KomPit} T.~Komatsu and C.~Pita-Ruiz, Poly-Cauchy numbers with level 2, Integral Transforms Spec. Func. \textbf{31} (2020), 570--585.
 

\bibitem{Komatsu} T.~Komatsu, J.~L.~Ram\'irez, and D.~Villamizar, Combinatorial approach to the Stirling numbers of the first kind with higher level, Studia Sci. Math. Hungar. \textbf{58} (3) (2021), 293--307.

\bibitem{Komatsu2} T.~Komatsu, J.~L.~Ram\'irez, and D.~Villamizar, A combinatorial approach to the generalized central factorial numbers, Mediterr. J. Math.  \textbf{18} (2021),  Article 192.

\bibitem{Mack} I.~G.~Macdonald, \emph{Symmetric functions and Hall polynomials}, Oxford University Press 1998.


\bibitem{Mansour} T.~Mansour and M.~Schork,  \emph{Commutations Relations, Normal Ordering,
and Stirling numbers} CRC Press, 2015.



\bibitem{Mezo} I.~Mez\H o,  \emph{Combinatorics and Number Theory of Counting Sequences},   CRC Press, 2020. 







\end{thebibliography}
\end{document}